\newtheorem{theorem}{Theorem}[section]
\newtheorem{definition}[theorem]{Definition}
\newtheorem{lemma}[theorem]{Lemma}
\newtheorem{corollary}[theorem]{Corollary}
\newtheorem{proposition}[theorem]{Proposition}
\newtheorem{remark}[theorem]{Remark}
\newtheorem{example}[theorem]{Example}
\newenvironment{Proof}{\noindent{\bf Proof} }{\hfill $\square$}
\begin{document}

\title[Weierstrass Semigroup and Pure Gaps at several points on the $GK$ curve]{Weierstrass Semigroup and Pure Gaps at several points on the $GK$ curve}

\author{A. S. Castellanos and G. Tizziotti}

\maketitle

\begin{abstract}
We determine the Weierstrass semigroup $H(P_{\infty}, P_{1}, \ldots , P_{m})$ at se\-veral points on the $GK$ curve. In addition, we present conditions to find pure gaps on the set of gaps $G(P_{\infty}, P_{1}, \ldots , P_{m})$. Finally, we apply the results to obtain AG codes with good relative parameters.
\end{abstract}

\section{Introduction}

Curves with many rational points and a large automorphism group have been investigated for their applications in coding theory. In \cite{GK}, Giulietti and Korchmáros introduced a maximal curve, so called $GK$ curve, which is not a subcover of the corresponding Hermitian curve. The $GK$ curve is one of the rare examples of curves over a finite field where the automorphism group $Aut(GK)$ is rather large with respect to the genus. Another interesting fact about this curve is that the set of rational points splits into two non-trivial orbits, $\mathcal{O}_1$ and $\mathcal{O}_2$, and $Aut(GK)$ acts on $\mathcal{O}_1$ as $PGU(3,n)$ in its doubly transitive permutation representation, see \cite[Theorem 3.4]{GKcodes}. More recently, we can find applications of the $GK$ curve in coding theory, see \cite{zini}, \cite{CT_GK} and \cite{GKcodes}.

 As is known, Weierstrass semigroup is also an important tool in coding theory, see e.g. \cite{carvalho2}, \cite{GarciaKimLax} and \cite{gretchen3}. In this work, we determine the Weierstrass semigroup $H(P_{\infty}, P_1,\ldots, P_m)$ at $m+1$ points on $\mathcal{O}_1$, with $1 \leq m \leq |\mathcal{O}_1|$, where $P_{\infty}$ is the single point at infinity on $GK$. Our results were obtained using the concept of discrepancy, for given rational points $P$ and $Q$ on a curve $\mathcal{X}$, see Definition \ref{defi discrepancy}. This concept was introduced by Duursma and Park in \cite{duursma}, and it was our main tool for obtain the set $\Gamma(P_{\infty}, P_1,\ldots,P_m)$, called \textit{minimal generating set} of $H(P_{\infty}, P_{1}, \ldots , P_{m})$, see Theorem \ref{maintheorem}. In addition, we present conditions to find pure gaps on the set of gaps $G(P_{\infty},P_{1}, \ldots , P_{m})$.

This paper is organized as follows. Section 2 contains general results about Weierstrass semigroup and discrepancy, in addition to basic facts about AG codes and the $GK$ curve. In Section 3, we determine the minimal generating set for the Weierstrass semigroup $H(P_{\infty},P_1,\ldots, P_m)$ at points on the orbit $\mathcal{O}_1$ cited above. Finally, in Section 4 we present some results about pure gaps and AG codes over the $GK$ curve.

\section{Preliminaries}

We begin this section by introducing some notations that will be used in this work. Let $\mathcal{X}$ be a nonsingular, projective, geometrically irreducible curve of genus $g \geq 1$ defined over a finite field $\mathbb{F}_q$, let $\mathbb{F}_q(\mathcal{X})$ be the field of rational functions and $Div(\mathcal{X})$ be the set of divisors on $\mathcal{X}$. For $f \in \mathbb{F}_q(\mathcal{X})$, the divisor of $f$ will be denoted by $(f)$ and the divisor of poles of $f$ by $(f)_{\infty}$. For a divisor $G$ on $\mathcal{X}$, let $\mathcal{L}(G):= \{ f \in \mathbb{F}_{q}(\mathcal{X}) \mbox{ ; } (f) + G \geq 0 \} \cup \{ 0 \}$ be the Riemann-Roch space of $G$ and let $\dim(\mathcal{L}(G))$ be the dimension of $\mathcal{L}(G)$ as an $\mathbb{F}_{q}$-vector space. Let $\Omega(G)$ be the space of differentials $\eta$ on $\mathcal{X}$ such that $\eta=0$ or $div(\eta)\geq G$, where $div(\eta)= \sum_{P\in \mathcal{X}}ord_{P}(\eta)P$ and $ord_P(\eta)$ is the order of $\eta$ at $P$. As follows, we denote $\mathbb{N}_{0} = \mathbb{N} \cup \{0\}$, where $\mathbb{N}$ is the set of positive integers.

\subsection{Weierstrass semigroup and Discrepancy}
Let $P_{1}, \ldots , P_{m}$ be distinct rational points on $\mathcal{X}$. The set
$$
H(P_{1}, \ldots , P_{m}) = \{(a_{1}, \ldots, a_{m}) \in \mathbb{N}_{0}^ {m} \mbox{ ; } \exists f \in \mathbb{F}_q(\mathcal{X}) \mbox{ with } (f)_{\infty} = \sum_{i=1}^ {m} a_{i}P_{i} \}
$$
is called the \textit{Weierstrass semigroup} at the points $P_{1}, \ldots , P_{m}$. It is not difficult to see that the set $H(P_{1}, \ldots , P_{m})$ is a semigroup. An element in $\mathbb{N}_{0}^{m} \setminus H(P_{1}, \ldots , P_{m})$ is called \textit{gap} and the set $G(P_{1}, \ldots , P_{m}) = \mathbb{N}_{0}^{m} \setminus H(P_{1}, \ldots , P_{m})$ is called \textit{gap set} of $P_{1}, \ldots , P_{m}$.

Define a partial order $\preceq$ on $\mathbb{N}_0^m$ by $(n_1,\ldots,n_m)\preceq (p_1,\ldots,p_m)$ if and only if $n_i\leq p_i$ for all $i$, $1\leq i\leq m$.

For ${\bf u}_1,\ldots,{\bf u}_t\in \mathbb{N}_0^{m}$, where, for all $k$, ${\bf u}_k = (u_{k_{1}}, \ldots , u_{k_{m}})$, we define the \emph{least upper bound} ($lub$) of the vectors ${\bf u}_1,\ldots,{\bf u}_t$ in the following way: \[lub\{{\bf u}_1,\ldots,{\bf u}_t\}=(\max\{{ u_{1_1}},\ldots,{ u_{t_1}}\},\ldots, \max\{{ u_{1_m}},\ldots,{ u_{t_m}}\} )\in \mathbb{N}_0^{m}.\] For $\mathbf{n}=(n_1,\ldots,n_m)\in \mathbb{N}_0^{m}$ and $i \in \{ 1,\ldots , m\}$, we set
$$
\nabla_i (\mathbf{n}):=\{ (p_1, \ldots , p_m) \in H(P_{1}, \ldots, P_m) \mbox{ ; } p_i=n_i \}.
$$

\begin{proposition}\label{minimal}\cite[Proposition 3]{gretchen1}
Let $\mathbf{n}=(n_1,\ldots,n_m)\in \mathbb{N}_0^{m}$. Then $\mathbf{n}$ is minimal, with respect to $\preceq$, in $\nabla_i(\mathbf{n})$ for some $i$, $1 \leq i \leq m$, if and only if $\mathbf{n}$ is minimal in $\nabla_i(\mathbf{n})$ for all $i$, $1 \leq i \leq m$.
\end{proposition}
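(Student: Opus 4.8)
The plan is to prove the nontrivial implication, namely that minimality of $\mathbf{n}$ in $\nabla_i(\mathbf{n})$ for a single index $i$ forces minimality in $\nabla_j(\mathbf{n})$ for every $j$; the reverse implication (``for all'' yields ``for some'') is immediate since $1\leq m$. Throughout I use that the hypothesis presupposes $\mathbf{n}\in H(P_1,\ldots,P_m)$, since otherwise $\mathbf{n}\notin\nabla_i(\mathbf{n})$ and the assertion that $\mathbf{n}$ is minimal there is vacuous; I fix $f\in\mathbb{F}_q(\mathcal{X})$ with $(f)_{\infty}=\sum_{k=1}^m n_kP_k$.

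First I would argue by contradiction: fix $j\neq i$ and suppose $\mathbf{n}$ is not minimal in $\nabla_j(\mathbf{n})$. Then there is $\mathbf{p}=(p_1,\ldots,p_m)\in\nabla_j(\mathbf{n})$ with $\mathbf{p}\prec\mathbf{n}$, that is, $p_j=n_j$, $p_k\leq n_k$ for all $k$, and $\mathbf{p}\neq\mathbf{n}$; let $g$ satisfy $(g)_{\infty}=\sum_{k=1}^m p_kP_k$. If $p_i=n_i$, then $\mathbf{p}\in\nabla_i(\mathbf{n})$ already, and $\mathbf{p}\prec\mathbf{n}$ contradicts the minimality of $\mathbf{n}$ in $\nabla_i(\mathbf{n})$; so I may assume $p_i<n_i$.

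The key step is a subtraction trick at the level of functions. Since $p_j=n_j$, both $f$ and $g$ have a pole of order exactly $n_j$ at $P_j$; choosing the constant $c\in\mathbb{F}_q$ that cancels the leading Laurent coefficients at $P_j$, the function $h:=g-cf$ has pole order strictly less than $n_j$ at $P_j$. At $P_i$ the two pole orders differ ($n_i$ for $f$ versus $p_i<n_i$ for $g$), so no cancellation occurs and $h$ retains a pole of order exactly $n_i$ there; at every other $P_k$ the pole order of $h$ is at most $\max\{p_k,n_k\}=n_k$, and $h$ has no poles outside $\{P_1,\ldots,P_m\}$. Moreover $h\neq 0$, for $h=0$ would give $(g)_{\infty}=(f)_{\infty}$, i.e. $\mathbf{p}=\mathbf{n}$. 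Writing $(h)_{\infty}=\sum_{k=1}^m h_kP_k$, the vector $\mathbf{h}=(h_1,\ldots,h_m)$ lies in $H(P_1,\ldots,P_m)$, satisfies $\mathbf{h}\preceq\mathbf{n}$ with $h_i=n_i$ (so $\mathbf{h}\in\nabla_i(\mathbf{n})$) and $h_j<n_j$ (so $\mathbf{h}\neq\mathbf{n}$); hence $\mathbf{h}\prec\mathbf{n}$ in $\nabla_i(\mathbf{n})$, again contradicting minimality.

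I expect the main obstacle to be the bookkeeping of pole orders in the construction of $h$: one must arrange the linear combination so that the top-order pole at $P_j$ is killed while the pole at $P_i$ is preserved at exactly $n_i$ and no coordinate is pushed above $n_k$. This works precisely because $\mathbf{p}$ and $\mathbf{n}$ agree in the $j$-th coordinate (making cancellation possible there) but differ in the $i$-th (making cancellation impossible there). An alternative packaging would invoke closure of $H(P_1,\ldots,P_m)$ under $lub$, but since $lub\{\mathbf{p},\mathbf{n}\}=\mathbf{n}$ that property alone can never lower a coordinate; the genuine decrease can only come from the subtraction $g-cf$, which is why I would make that step the heart of the argument.
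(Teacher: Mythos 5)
The paper itself offers no proof of this proposition: it is quoted verbatim from Matthews \cite[Proposition 3]{gretchen1}, so there is no in-paper argument to compare against. Your proof is the standard one from that literature, and on the intended domain it is correct: the reduction to the case $p_i<n_i$, the choice of $c\in\mathbb{F}_q$ cancelling the leading Laurent coefficients at $P_j$ (legitimate because $P_j$ is $\mathbb{F}_q$-rational, so one may compute leading coefficients with respect to an $\mathbb{F}_q$-rational local parameter), and the valuation bookkeeping giving $h=g-cf$ exact pole order $n_i$ at $P_i$, pole order strictly below $n_j$ at $P_j$, and at most $n_k$ elsewhere, do produce an element of $\nabla_i(\mathbf{n})$ strictly below $\mathbf{n}$, contradicting minimality there. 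Your framing remark that the $lub$-closure property could never do this job is also accurate.

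One caveat, which is really a defect of the statement as transcribed rather than of your argument: as printed, with $\mathbf{n}\in\mathbb{N}_0^{m}$, the proposition is false when a coordinate vanishes. Take $m=2$ and $\mathbf{n}=(2g,0)$, where $2g\in H(P_1)$. Then $\mathbf{n}\in H(P_1,P_2)$ and $\mathbf{n}$ is minimal in $\nabla_1(\mathbf{n})$, since any element of $\nabla_1(\mathbf{n})$ strictly below it would need a negative second coordinate; but $\mathbf{n}$ is not minimal in $\nabla_2(\mathbf{n})$, because $(0,0)\in\nabla_2(\mathbf{n})$ and $(0,0)\precneq\mathbf{n}$. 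Your proof tacitly uses $n_j\geq 1$ in exactly two places: the nonvanishing of $c$ (the leading coefficient of $g$ at $P_j$ is guaranteed nonzero only because $g$ has an actual pole there), and the conclusion $h_j<n_j$, which is impossible when $n_j=0$ since pole orders are nonnegative. This is harmless for the paper, whose Definition \ref{defi Gamma} and Lemma \ref{mPoints} only ever apply the proposition to tuples with all coordinates positive (indeed, coordinates lying in gap sets), and on $\mathbb{N}^m$ your argument is complete; but you should state the positivity hypothesis explicitly at the cancellation step, since that is precisely where the $\mathbb{N}_0$ version breaks.
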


\medskip

\begin{proposition}\label{lubH}\cite[Proposition 6]{gretchen1}
Suppose that $1 \leq t \leq m \leq q$ and ${\bf u}_1,\ldots,{\bf u}_t\in H(P_{1}, \ldots , P_{m})$. Then $lub\{{\bf u}_1,\ldots,{\bf u}_t\} \in H(P_{1}, \ldots , P_{m})$.
\end{proposition}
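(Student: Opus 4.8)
The plan is to realize the $lub$ as the pole divisor of a suitable $\mathbb{F}_q$-linear combination of functions attached to ${\bf u}_1,\ldots,{\bf u}_t$. Since each ${\bf u}_k=(u_{k_1},\ldots,u_{k_m})\in H(P_1,\ldots,P_m)$, by definition there is $f_k\in \mathbb{F}_q(\mathcal{X})$ with $(f_k)_\infty=\sum_{i=1}^m u_{k_i}P_i$. Writing $n_i=\max\{u_{1_i},\ldots,u_{t_i}\}$, so that $lub\{{\bf u}_1,\ldots,{\bf u}_t\}=(n_1,\ldots,n_m)$, I would look for constants $c_1,\ldots,c_t\in\mathbb{F}_q$ such that the function $f=\sum_{k=1}^t c_k f_k$ has pole divisor exactly $(f)_\infty=\sum_{i=1}^m n_i P_i$; this immediately yields $lub\{{\bf u}_1,\ldots,{\bf u}_t\}\in H(P_1,\ldots,P_m)$.

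For the construction, I would fix a local parameter $t_i$ at each $P_i$ and examine the principal part of $f$ at $P_i$. Because every $f_k$ has poles only among $P_1,\ldots,P_m$, so does $f$, and the pole order of $f$ at $P_i$ is at most $n_i$. Let $S_i=\{k : u_{k_i}=n_i\}$ be the set of indices achieving the maximum in the $i$-th coordinate; for $k\in S_i$ the function $f_k$ has a genuine pole of order $n_i$ at $P_i$, with leading coefficient $\lambda_{k,i}\neq 0$, while for $k\notin S_i$ one has $u_{k_i}<n_i$ and the pole order is strictly smaller. Hence the coefficient of $t_i^{-n_i}$ in the local expansion of $f$ at $P_i$ equals the linear form $L_i(c_1,\ldots,c_t)=\sum_{k\in S_i}c_k\lambda_{k,i}$, which is not identically zero. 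Thus $f$ attains pole order exactly $n_i$ at $P_i$ precisely when $L_i(c)\neq 0$, and it suffices to choose $c=(c_1,\ldots,c_t)$ off the hyperplanes $\{L_i=0\}$ for all $i$ with $n_i>0$ (the coordinates with $n_i=0$ impose no condition, since then $f$ has no pole at $P_i$).

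The main point is a counting argument guaranteeing that such a $c$ exists, and this is exactly where the hypothesis $m\le q$ enters. Each $\{L_i=0\}$ is a hyperplane through the origin of $\mathbb{F}_q^t$, containing $q^{t-1}$ points, and there are at most $m$ of them. Since all of them pass through the origin, their union has at most $m(q^{t-1}-1)+1\le q(q^{t-1}-1)+1=q^t-q+1<q^t$ points, so it cannot exhaust $\mathbb{F}_q^t$. Choosing $c$ outside this union produces the desired $f$, and completes the argument.

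I expect the only delicate step to be this final count: one must ensure that the conditions ``pole order exactly $n_i$'' can be met \emph{simultaneously} at all $m$ points, and that the bound $m\le q$ is precisely what keeps the union of the corresponding hyperplanes from covering $\mathbb{F}_q^t$. Everything else---existence of the $f_k$, the upper bound on pole orders of a linear combination, and the identification of the leading coefficient as a nonzero linear form in $c$---is routine once local parameters are fixed.
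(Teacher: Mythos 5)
The paper does not prove this proposition at all---it quotes it from \cite{gretchen1} (Proposition~6)---and your argument is correct and essentially reproduces the standard proof given there: realize $lub\{{\bf u}_1,\ldots,{\bf u}_t\}$ as the pole divisor of a generic linear combination $\sum_{k=1}^t c_k f_k$, where the condition of achieving pole order exactly $n_i$ at $P_i$ excludes one linear hyperplane $L_i=0$ in $\mathbb{F}_q^t$ per coordinate, and the hypothesis $m\leq q$ guarantees that the union of these at most $m$ hyperplanes through the origin, having at most $m(q^{t-1}-1)+1\leq q^t-q+1<q^t$ points, cannot cover $\mathbb{F}_q^t$. Your counting step and the treatment of coordinates with $n_i=0$ are both correct, so there is nothing to fix.
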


\medskip

\begin{definition} \label{defi Gamma}
Let $\Gamma(P_{1})=H(P_1)$ and, for $m\geq 2$, define
$$
\Gamma(P_{1}, \ldots, P_{m}):=\{{\bf n}\in \mathbb{N}^m: \mbox{ for some } i, 1\leq i\leq m, {\bf n} \mbox{ is minimal in } \nabla_i ({\bf n})\}.
$$
\end{definition}

\medskip

\begin{lemma}\label{mPoints} \cite[Lemma 4]{gretchen1}
For $m \geq 2$, $\Gamma(P_{1}, \ldots, P_{m}) \subseteq G(P_1)\times\cdots\times G(P_m)\;.$
\end{lemma}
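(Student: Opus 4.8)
The plan is to argue by contradiction, showing that a single nonzero coordinate lying in a one-point semigroup would destroy the minimality that defines $\Gamma$. First I would fix $\mathbf{n}=(n_1,\ldots,n_m)\in\Gamma(P_1,\ldots,P_m)$ and unpack the definition: by Definition \ref{defi Gamma} there is an index $i$ for which $\mathbf{n}$ is minimal in $\nabla_i(\mathbf{n})$, and by Proposition \ref{minimal} this minimality then holds for every $i$, $1\le i\le m$. In particular $\mathbf{n}$ is a member of each $\nabla_i(\mathbf{n})$, so $\mathbf{n}\in H(P_1,\ldots,P_m)$; and since $\Gamma(P_1,\ldots,P_m)\subseteq\mathbb{N}^m$, every coordinate satisfies $n_i\ge 1$.

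Next I would suppose, toward a contradiction, that $\mathbf{n}\notin G(P_1)\times\cdots\times G(P_m)$. Because each $n_i\ge 1$, this means precisely that $n_j\in H(P_j)$ for some index $j$. By the definition of the one-point Weierstrass semigroup, there is then a function $g\in\mathbb{F}_q(\mathcal{X})$ with $(g)_\infty=n_jP_j$. Reading this as an identity of pole divisors supported on the full set $P_1,\ldots,P_m$, it exhibits the vector $\mathbf{p}$ having $n_j$ in the $j$-th entry and $0$ in every other entry as an element of $H(P_1,\ldots,P_m)$.

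The decisive step is to compare $\mathbf{p}$ with $\mathbf{n}$. Since $p_j=n_j$, we have $\mathbf{p}\in\nabla_j(\mathbf{n})$, and coordinatewise $p_i=0\le n_i$ for all $i$, so $\mathbf{p}\preceq\mathbf{n}$. Here the hypothesis $m\ge 2$ is exactly what makes the argument work: there is an index $i\ne j$, and for it $p_i=0<1\le n_i$, so the inequality is strict and $\mathbf{p}\ne\mathbf{n}$. Thus $\mathbf{p}$ is a strictly smaller element of $\nabla_j(\mathbf{n})$, contradicting the minimality of $\mathbf{n}$ in $\nabla_j(\mathbf{n})$ recorded in the first step. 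Hence no such $j$ exists, every $n_j$ lies in $G(P_j)$, and $\mathbf{n}\in G(P_1)\times\cdots\times G(P_m)$, as required.

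I expect the only delicate point, rather than a genuine obstacle, to be the bookkeeping in the first paragraph: one must note that minimality in $\nabla_i(\mathbf{n})$ already forces $\mathbf{n}\in H(P_1,\ldots,P_m)$ (so that Proposition \ref{minimal} applies), and that the ambient index set is $\mathbb{N}^m$ rather than $\mathbb{N}_0^m$, which is what guarantees the strict inequality $n_i\ge 1$ used in the comparison step.
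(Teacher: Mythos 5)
Your proof is correct. The paper itself gives no proof of this lemma, citing it directly from \cite[Lemma 4]{gretchen1}; your argument by contradiction --- using Proposition \ref{minimal} to get minimality of $\mathbf{n}$ in $\nabla_j(\mathbf{n})$ at the offending index $j$, then exhibiting $(0,\ldots,0,n_j,0,\ldots,0)\in H(P_1,\ldots,P_m)$ as a strictly smaller element of $\nabla_j(\mathbf{n})$ (strict because $m\geq 2$ and $\Gamma\subseteq\mathbb{N}^m$) --- is exactly the standard argument from that reference, with the membership bookkeeping ($\mathbf{n}\in\nabla_i(\mathbf{n})$, hence $\mathbf{n}\in H(P_1,\ldots,P_m)$) handled correctly.
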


\medskip

In \cite{gretchen1}, Theorem 7, it is shown that, if $2\leq m \leq q$, then $H(P_1,\ldots,P_m)= $

\[ \left\{\begin{array}{cl} lub\{{\bf u}_1,\ldots,{\bf u}_m\}\in \mathbb{N}_0^m: & {\bf u}_i \in \Gamma(P_{1}, \ldots, P_{m}) \\& \mbox{ or } ( u_{i_1}, \ldots, u_{i_k}) \in \Gamma(P_{i_1}, \ldots , P_{i_k}) \\ & \mbox{ for some } \{i_1,\ldots,i_k\}\subset\{1,\ldots,m\}  \mbox{ such that } \\ & i_1<\cdots<i_k \mbox{ and } u_{i_{k+1}} = \cdots = u_{i_\ell}=0, \\ & \mbox{ where }  \{i_{k+1},\ldots,i_m\}\subset\{1,\ldots,\ell\} \setminus \{i_1,\ldots,i_k\} \end{array}\right\}\;.\]

Therefore, the Weierstrass semigroup $H(P_1,\ldots,P_m)$ is completely determined by $\Gamma(P_1,\ldots,P_m)$. In \cite{gretchen1}, Matthews called the set $\Gamma(P_1,\ldots,P_m)$ of \emph{minimal generating set} of $H(P_1,\ldots,P_m)$.

In \cite[Section 5]{duursma}, Duursma and Park introduced the concept of discrepancy as follows.

\begin{definition}\label{defi discrepancy}
A divisor $A \in Div(\mathcal{X})$ is called a \textit{discrepancy} for two rational points $P$ and $Q$ on $\mathcal{X}$ if $\mathcal{L}(A)\neq \mathcal{L}(A-P)=\mathcal{L}(A-P-Q)$ and $\mathcal{L}(A)\neq \mathcal{L}(A-Q)=\mathcal{L}(A-P-Q)$.
\end{definition}

%

The next result relates the concept of discrepancy with the set $\Gamma(P_1,\ldots,P_m)$.

\begin{lemma} \label{lemma discrepancy} \cite[Lemma 2.6]{CT}
Let ${\bf n}=(n_1,\ldots,n_{m})\in H(P_1,\ldots,P_m)$. Then ${\bf n}\in \Gamma(P_1,\ldots,P_m)$ if and only if the divisor $A=n_1P_1+\cdots + n_m P_m$ is a discrepancy with respect to $P$ and $Q$ for any two rational points $P,Q\in \{P_1,\ldots,P_m\}$.
\end{lemma}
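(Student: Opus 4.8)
The plan is to recast both conditions as statements about the dimensions of a few nested Riemann--Roch spaces and then match them. Throughout, abbreviate $\mathcal L_s=\mathcal L(A-P_s)$ and $\mathcal L_{s,t}=\mathcal L(A-P_s-P_t)$, and keep in mind that each single-point reduction lowers the dimension by $0$ or $1$, and that $\mathcal L_{s,t}=\mathcal L_s\cap\mathcal L_t$.

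First I would reformulate Definition \ref{defi discrepancy}. For $P=P_s$, $Q=P_t$ the four conditions read $\dim\mathcal L(A)\neq\dim\mathcal L_s$, $\dim\mathcal L_s=\dim\mathcal L_{s,t}$, together with the two symmetric statements obtained by swapping $s$ and $t$. Because consecutive reductions drop the dimension by at most one, these four statements hold simultaneously if and only if
\[
\dim\mathcal L(A-P_s)=\dim\mathcal L(A-P_t)=\dim\mathcal L(A-P_s-P_t)=\dim\mathcal L(A)-1 .
\]
So ``$A$ is a discrepancy for every pair'' becomes the purely numerical demand that, for all $s\neq t$, the three reduced spaces collapse onto a single codimension-one subspace of $\mathcal L(A)$.

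The crux is to translate minimality in $\nabla_s$ into the same language. Here one observes that ${\bf n}$ fails to be minimal in $\nabla_s({\bf n})$ exactly when there is ${\bf p}\in H(P_1,\dots,P_m)$ with ${\bf p}\prec{\bf n}$ and $p_s=n_s$; realizing ${\bf p}$ by a function $g$ with $(g)_{\infty}=\sum p_jP_j$, the conditions $p_s=n_s$ and $p_t<n_t$ (for some $t\neq s$) say precisely that $g\in\mathcal L(A-P_t)\setminus\mathcal L(A-P_s-P_t)$. Reading this in both directions — and using that a function in $\mathcal L(A)$ can only have poles on the support of $A$, so its pole divisor is a genuine element of $H$ — gives, for ${\bf n}\in H$,
\[
{\bf n}\text{ is minimal in }\nabla_s({\bf n})\iff \mathcal L(A-P_t)=\mathcal L(A-P_s-P_t)\text{ for every }t\neq s .
\]
I expect this step to be the main obstacle: it is exactly the passage between the combinatorial order condition and membership in the nested spaces, and both implications require carefully matching prescribed pole orders with the filtration $\mathcal L(A)\supseteq\mathcal L(A-P_t)\supseteq\mathcal L(A-P_s-P_t)$ and verifying realizability in $H$.

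Finally I would assemble the pieces. By Proposition \ref{minimal}, ${\bf n}\in\Gamma(P_1,\dots,P_m)$ iff ${\bf n}$ is minimal in $\nabla_s({\bf n})$ for \emph{every} $s$, which by the displayed translation means $\mathcal L(A-P_t)=\mathcal L(A-P_s-P_t)$ for all ordered pairs; applying this to both orderings of a pair yields $\dim\mathcal L(A-P_s)=\dim\mathcal L(A-P_t)=\dim\mathcal L(A-P_s-P_t)$. To promote this to the full discrepancy condition I only need the extra relation $\dim\mathcal L(A)=\dim\mathcal L(A-P_s)+1$; this follows since ${\bf n}\in H$ furnishes an $f$ with $(f)_{\infty}=A$, hence a pole of order exactly $n_s\geq 1$ at each support point (recall $\Gamma\subseteq\mathbb{N}^{m}$, consistent with Lemma \ref{mPoints}), so $f\in\mathcal L(A)\setminus\mathcal L(A-P_s)$. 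Conversely, the discrepancy condition already contains $\mathcal L(A-P_t)=\mathcal L(A-P_s-P_t)$ for every pair, which is exactly minimality in each $\nabla_s$, so ${\bf n}\in\Gamma$; a short separate check — testing the constant function against a would-be collapse $\mathcal L(A-P_t)=\mathcal L(A-P_s)$ — excludes vanishing coordinates and keeps ${\bf n}$ in $\mathbb{N}^{m}$. This closes the equivalence.
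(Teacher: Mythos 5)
The paper itself offers no proof of this lemma --- it is quoted verbatim from \cite[Lemma 2.6]{CT} --- so there is nothing internal to compare against line by line; judged on its own, your argument is correct and is the natural one. Your two reductions both check out. The numerical reformulation of Definition \ref{defi discrepancy} (the three reduced spaces coincide and sit in codimension one inside $\mathcal{L}(A)$) is valid precisely because each one-point reduction changes the dimension by $0$ or $1$. The dictionary ``$\mathbf{n}$ is minimal in $\nabla_s(\mathbf{n})$ $\iff$ $\mathcal{L}(A-P_t)=\mathcal{L}(A-P_s-P_t)$ for all $t\neq s$'' is exactly the right pivot, and both implications are correctly realized through pole divisors: a witness $g\in\mathcal{L}(A-P_t)\setminus\mathcal{L}(A-P_s-P_t)$ has pole order exactly $n_s$ at $P_s$ and at most $n_t-1$ at $P_t$, and since its poles are confined to $\mathrm{supp}(A)$ its pole divisor gives an element of $\nabla_s(\mathbf{n})$ strictly below $\mathbf{n}$, and conversely. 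You also correctly isolated where positivity of the coordinates enters: the function $f$ with $(f)_\infty=A$ lies in $\mathcal{L}(A)\setminus\mathcal{L}(A-P_s)$ only because $n_s\geq 1$.

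One caveat concerns your final ``short separate check.'' Testing the constant function $1$ rules out a vanishing coordinate $n_t=0$ only when some \emph{other} coordinate $n_s$ is positive: you need $1\in\mathcal{L}(A-P_s)$, which fails if $n_s=0$ as well. Consequently the all-zero vector escapes the check, and it is a genuine exception: for $\mathbf{n}=\mathbf{0}$ the divisor $A=0$ \emph{does} satisfy Definition \ref{defi discrepancy}, since $\mathcal{L}(0)=\mathbb{F}_q\neq\{0\}=\mathcal{L}(-P)=\mathcal{L}(-P-Q)$, while $\mathbf{0}\notin\Gamma(P_1,\ldots,P_m)\subseteq\mathbb{N}^m$. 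So the backward implication of the lemma, read literally, fails at $\mathbf{n}=\mathbf{0}$. This is a defect of the statement as quoted rather than of your argument --- implicitly $\mathbf{n}\in\mathbb{N}^m$ is intended, and every use of the lemma in the paper has all coordinates positive --- but your sentence claiming the constant-function test ``closes the equivalence'' should be weakened to say it excludes vanishing coordinates except in the trivial case $\mathbf{n}=\mathbf{0}$, which must be excluded by hypothesis.
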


\subsection{AG codes} Let $D=P_1 + \ldots + P_n$ be a divisor on $\mathcal{X}$ such that $P_i \neq P_j$ for $i \neq j$. Let $G$ be another divisor on $\mathcal{X}$ such that $supp(D) \cap supp(G) = \emptyset$. Consider the maps $e_{v}: \mathcal{L}(G)  \rightarrow \mathbb{F}_{q}^n$ and $\varphi: \Omega(G-D) \rightarrow \mathbb{F}_{q}^n$ defined, respectively, by $e_v(f):=(f(P_1), \ldots , f(P_n))$ and $\varphi(\eta):=(res_{P_1}(\eta), \ldots , res_{P_n}(\eta))$, where $res_{P_i}(\eta)$ is the residue of $\eta$ at $P_i$, $i=1,\ldots,n$. We define the AG codes $C_{\mathcal{L}}(D,G)$ and $C_{\Omega}(D,G)$ as the images of the maps $e_v$ and $\varphi$, respectively. That is,
$$
\begin{array}{ccl}
C_{\mathcal{L}}(D,G) & := & \{ (f(P_{1}), \ldots , f(P_{n})) \mbox{ ; } f \in L(G)\};\\
C_{\Omega}(D,G) & := & \{ (res_{P_{1}}(\eta), \ldots , res_{P_{n}}(\eta)) \mbox{ ; } \eta \in \Omega (G-D) \}.
\end{array}
$$

The AG codes $C_{\mathcal{L}}(D,G)$ and $C_{\Omega}(D,G)$ are dual to each other. Let $[n,k,d]$ and $[n,k_{\Omega}, d_{\Omega}]$ be the length, dimension and minimum distance of $C_{\mathcal{L}}(D,G)$ and $C_{\Omega}(D,G)$, respectively. By Riemann-Roch Theorem we can estimate the parameters $[n,k,d]$ and $[n,k_{\Omega}, d_{\Omega}]$. In particular, if $2g-2 < deg(G) < n$, we have that $k=deg(G) - g + 1$, $d \geq n - deg(G)$,  $k_{\Omega}= n - deg(G) + g - 1$ and $d_{\Omega} \geq deg(G) - 2g + 2$, see e.g. \cite{vanlint2}. The right-hand side of the inequalities involving the minimum distance is known as the Goppa bound. One of the ways to obtain codes with good parameters is to find codes whose minimum distance have bounds better than the Goppa bound. In addition, another way is study codes over curves with many rational points, more specifically, codes arising from maximal curves. We remember that a curve $\mathcal{X}$ of genus $g$ over $\mathbb{F}_{q}$ is a \textit{maximal curve} if its number of rational points attains the Hasse-Weil upper bound, namely equals $2g\sqrt{q}+q+1$.

If $G=aQ$ for some rational point $Q$ on $\mathcal{X}$ and $D$ is the sum of all the other rational points on $\mathcal{X}$, then the AG codes $C_{\mathcal{L}}(D,G)$ and $C_{\Omega}(D,G)$ are called \textit{one-point AG codes}. Analogously, if $G= a_{1}Q_{1} + \cdots + a_{m}Q_{m}$, for $m$ distinct rational points on $\mathcal{X}$ and $D$ is the sum of all the other rational points on $\mathcal{X}$, then $C_{\mathcal{L}}(D,G)$ and $C_{\Omega}(D,G)$ are called $m$-\textit{point AG codes}. For more details about coding theory, see \cite{vanlint}, \cite{stichtenoth2} and \cite{vanlint2}.

\subsection{The $GK$ curve}\label{subsection GK} Let $q=n^3$, where $n \geq 2$ is a prime power. The $GK$ curve over $\mathbb{F}_{q^2}$ is the curve of $\mathbb{P}^{3}(\overline{\mathbb{F}}_{q^2})$ with affine equations

\begin{equation} \label{equation GK}
\left\{ \begin{array}{c}
  Z^{n^2-n+1} = Y h(X) \\
  X^{n} + X = Y^{n+1}\;\;\;,
\end{array} \right.
\end{equation}

where $\displaystyle h(X) = \sum_{i=0}^{n} (-1)^{i+1} X^{i(n-1)}$. We will denote this curve simply by $GK$. The curve $GK$ is absolutely irreducible, nonsingular, has $n^8 - n^6 + n^5 + 1$ $\mathbb{F}_{q^2}$-rational points, a single point at infinity $P_{\infty}=(1:0:0:0)$ and its genus is $g = \dfrac{1}{2} (n^3 + 1)(n^2 - 2) + 1$. The $GK$ curve has an important properties as it lies on the Hermitian surface $\mathcal{H}_3$ with affine equation $X^{n^3} + X = Y^{n^3 + 1} + Z^{n^3+1}$; it is a maximal curve and, for $q>8$, $GK$ is the only know curve that is maximal but not $\mathbb{F}_{q^2}$-covered by the Hermitian curve $\mathcal{H}_2$ defined over $\mathbb{F}_{q^2}$ and its automorphism group $Aut(GK)$ has size $n^3(n^3+1)(n^2-1)(n^2-n+1)$ which turns out to be very large compared to the genus $g$.

Let $GK(\mathbb{F}_{q^2})$ be the set of $\mathbb{F}_{q^2}$-rational points of $GK$. For $j=1,\ldots,n$, let $P_j = (a_j,0,0)\in GK(\mathbb{F}_{q^2})$ such that $a_{j}^{n}+a_j=0$, and, for $\ell=1,\ldots,n^3 - n$, let $Q_{\ell}=(a_{\ell},b_{\ell},0)\in GK(\mathbb{F}_{q^2})$ such that $b_{\ell}\neq 0$ and $a_{\ell}^{n}+a_{\ell}=b_{\ell}^{n+1}$. In the following, $P_{\infty}$, $P_j$, for $j=1,\ldots,n$, and $Q_{\ell}$, for $\ell=1,\ldots,n^3 - n$, will be the points given above.

Since $P_{\infty}$ is a single point at infinity of $GK$ and the function field $\mathbb{F}_{q^2}(GK)$ is $\mathbb{F}_{q^2}(x,y,z)$ with $z^{n^2-n+1} = y h(x)$ and $x^n + x = y^{n+1}$ we have that
\begin{equation} \label{divisors}
\begin{array}{rcl}
 (x-a_j) & = & (n^3 + 1)P_j - (n^3+1)P_{\infty}, \mbox{ for } j=1,\ldots,n ; \\
 (y) & = & \sum_{j=1}^{n}(n^2 - n + 1)P_j - n(n^2-n+1)P_{\infty}; \\
 (z) & = & \sum_{j=1}^{n}P_j + \sum_{\ell = 1}^{n^3-n} Q_{\ell} - n^3 P_{\infty} .
\end{array}
\end{equation}

\begin{theorem} \label{theorem bitransitive} \cite[Theorem 3.4]{GKcodes}
The set of $\mathbb{F}_{q^2}$-rational points of $GK$ splits into two orbits under the action of $Aut(GK)$. One orbit, say $\mathcal{O}_1$, has size $n^3+1$ and consists of the points $P_j$ and $Q_{\ell}$ as above together with the infinite point $P_{\infty}$. The other orbit has size $n^3(n^3+1)(n^2-1)$ and consists of the points $P=(a,b,c) \in GK(\mathbb{F}_{q^2})$ with $c \neq 0$. Furthermore, $Aut(GK)$ acts on $\mathcal{O}_1$ as $PGU(3,n)$ in its doubly transitive permutation representation.
\end{theorem}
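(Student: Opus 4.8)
The plan is to split $GK(\mathbb{F}_{q^2})$ according to the value of the third coordinate and to pin down each of the two resulting sets by an orbit--stabiliser count against the known order $|Aut(GK)| = n^3(n^3+1)(n^2-1)(n^2-n+1)$.

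First I would describe $\mathcal{O}_1$ explicitly as the set of rational points with $z=0$, together with $P_\infty$. An affine point has $z=0$ exactly when $yh(x)=0$, and these are precisely the $P_j$ (where $y=0$) and the $Q_\ell$ (where $h(x)=0$ and $y\neq 0$); counting gives $n+(n^3-n)+1=n^3+1$. To see that this number is forced I would project $(x,y,z)\mapsto(x,y)$ onto the Hermitian curve $\mathcal{H}\colon x^n+x=y^{n+1}$ and use the identity $(x^{n-1}+1)\,h(x)=(-1)^{n+1}x^{n^2-1}-1$, which for every $x\in\mathbb{F}_{n^2}$ gives $h(x)=0$ or $x^{n-1}+1=0$; on $\mathcal{H}$ the latter forces $y=0$, so $yh(x)=0$ in all cases. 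Thus the projection is a bijection from $\mathcal{O}_1$ onto $\mathcal{H}(\mathbb{F}_{n^2})$, and since $\mathcal{H}$ is maximal over $\mathbb{F}_{n^2}$ this recovers $|\mathcal{O}_1|=n^3+1$.

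Next I would invoke the subgroup structure of $Aut(GK)$ from \cite{GK}: it contains the cyclic group $C=\langle\sigma\rangle$ generated by $\sigma\colon(x,y,z)\mapsto(x,y,\zeta z)$ of order $n^2-n+1$ (with $\zeta$ a primitive $(n^2-n+1)$-th root of unity) and a copy of $PGU(3,n)$ of order $n^3(n^3+1)(n^2-1)$, and these orders multiply to $|Aut(GK)|$. Each lift of an element of $PGU(3,n)$ multiplies $z$ by a constant (so that the cover $z^{n^2-n+1}=yh(x)$ is preserved), hence commutes with $\sigma$; therefore $C$ is central, and its fixed locus, which is exactly $\mathcal{O}_1$, is $Aut(GK)$-invariant, as is $\mathcal{O}_2=GK(\mathbb{F}_{q^2})\setminus\mathcal{O}_1$. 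Passing to the quotient $GK/C\cong\mathcal{H}$, the group $PGU(3,n)$ acts as the full automorphism group of the Hermitian curve, which is doubly transitive on $\mathcal{H}(\mathbb{F}_{n^2})$; transporting this through the bijection of the previous paragraph shows that $Aut(GK)$ acts on $\mathcal{O}_1$ as $PGU(3,n)$ in its doubly transitive representation, and in particular that $\mathcal{O}_1$ is a single orbit.

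Finally, for $\mathcal{O}_2$ I would combine counting with orbit--stabiliser. Subtracting gives $|\mathcal{O}_2|=(n^8-n^6+n^5+1)-(n^3+1)=n^3(n^2-1)(n^3+1)$, which is exactly $|PGU(3,n)|$. Fixing $P\in\mathcal{O}_2$, the heart of the matter is to show that $\mathrm{Stab}(P)$ is cyclic of order precisely $n^2-n+1$; granting this, orbit--stabiliser yields orbit size $|Aut(GK)|/(n^2-n+1)=n^3(n^2-1)(n^3+1)=|\mathcal{O}_2|$, so $\mathcal{O}_2$ is a single orbit, and $PGU(3,n)$ in fact acts on it regularly. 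I expect the main obstacle to be precisely this last stabiliser computation for a point of $\mathcal{O}_2$, together with the verification that the lifts of $PGU(3,n)$ scale $z$ by a constant (which is what makes $C$ central and thereby forces the $Aut(GK)$-invariance of $\mathcal{O}_1$): both rest on the explicit generators of $Aut(GK)$ and their action on the coordinates as determined in \cite{GK}, whereas the remaining steps are bookkeeping with the Hermitian curve.
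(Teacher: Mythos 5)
You should first note that the paper contains no proof of this statement at all: it is quoted verbatim from \cite[Theorem 3.4]{GKcodes}, whose proof rests on the determination of $Aut(GK)$ in \cite{GK}. So your proposal can only be measured against those sources; its architecture (split by the $z$-coordinate, quotient by the Kummer cover to the Hermitian curve, transport double transitivity, orbit--stabilizer for $\mathcal{O}_2$) is indeed the one used there, and your treatment of $\mathcal{O}_1$ is correct: the identity $(x^{n-1}+1)h(x)=(-1)^{n+1}x^{n^2-1}-1$ does show $yh(x)=0$ at every affine point of $\mathcal{H}(\mathbb{F}_{n^2})$, so the projection identifies $\mathcal{O}_1$ with $\mathcal{H}(\mathbb{F}_{n^2})$ and gives $|\mathcal{O}_1|=n^3+1$.

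However, one step of your argument is false as stated. It is not true that every lift to $GK$ of an element of $PGU(3,n)$ multiplies $z$ by a constant. On $\mathcal{H}$ one has $div(yh(x))=D-(n^3+1)P_\infty$ with $D$ the ($PGU$-invariant) sum of all rational points, so for $g\in Aut(\mathcal{H})$ with $g(P_\infty)\neq P_\infty$ one gets $(yh(x))\circ g = c\,v^{\,n^2-n+1}\,yh(x)$ where $v$ is a \emph{nonconstant} function with divisor $(n+1)(P_\infty-g^{-1}P_\infty)$ (using $(n+1)P\sim(n+1)Q$ on $\mathcal{H}$); the lift then sends $z\mapsto c'vz$. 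Equivalently, in the linear action on $\mathbb{P}^3$ from \cite{GK}, $Z$ is scaled but the affine function $z=Z/T$ picks up the image of $T$. Hence your derivation that $C=\langle\sigma\rangle$ is central collapses. The conclusion you need --- invariance of $\mathcal{O}_1=\mathrm{Fix}(C)$ and a faithful action of $Aut(GK)/C\cong PGU(3,n)$ on it --- only requires \emph{normality} of $C$, which can be rescued: every element of $Aut(\mathcal{H})$ lifts (by the divisor computation above), the lifts form a subgroup of order $(n^2-n+1)\cdot|PGU(3,n)|=|Aut(GK)|$, so every automorphism of $GK$ preserves the subfield $\mathbb{F}_{q^2}(x,y)$ and conjugation carries $C=Gal(\mathbb{F}_{q^2}(GK)/\mathbb{F}_{q^2}(x,y))$ to itself. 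This repair is absent from your write-up.

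The second, larger gap is the single-orbit claim for $\mathcal{O}_2$, which you explicitly defer: the assertion that the stabilizer of $P\in\mathcal{O}_2$ is cyclic of order exactly $n^2-n+1$ is not a routine verification but the substantive content of that half of the theorem, and in \cite{GK} it is obtained from the explicitly presented subgroup structure of $Aut(GK)$ (tame cyclic stabilizers coming from Singer-type subgroups). Note also a conceptual slip there: $\sigma$ fixes no point of $\mathcal{O}_2$ (it sends $(a,b,c)$ to $(a,b,\zeta c)$), so $\mathrm{Stab}(P)\cap C=1$, and the phrase ``$PGU(3,n)$ acts on $\mathcal{O}_2$ regularly'' is not well posed: $PGU(3,n)$ arises as the quotient $Aut(GK)/C$, not canonically as a subgroup, and $C$ acts nontrivially on $\mathcal{O}_2$. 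As it stands, then, your proposal proves the $\mathcal{O}_1$ statements modulo the centrality repair, but the two-orbit decomposition itself is cited rather than proved.
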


\begin{proposition} \label{proposition H(P)} \cite[Proposition 3.1]{GKcodes}
Let $P_j$ and $Q_{\ell}$ be as above. Then, $H(P_{\infty}) = H(P_j) = H(Q_{\ell}) = \langle n^3 - n^2 + n , n^3 , n^3 + 1 \rangle$, for each $j=1,\ldots,n$, and $\ell=1,\ldots,n^3-n$.
\end{proposition}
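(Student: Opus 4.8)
The plan is to exploit the transitivity of $Aut(GK)$ on $\mathcal{O}_1$ to reduce all three equalities to a single semigroup computation at $P_{\infty}$. By Theorem \ref{theorem bitransitive}, the points $P_{\infty}$, $P_j$ and $Q_{\ell}$ all lie in the orbit $\mathcal{O}_1$, on which $Aut(GK)$ acts (doubly) transitively. Hence, for any two of these points $R$ and $R'$ there is $\sigma \in Aut(GK)$ with $\sigma(R)=R'$; the induced $\mathbb{F}_{q^2}$-automorphism of the function field carries a function whose pole divisor is $aR$ to one whose pole divisor is $aR'$, preserving pole orders. Therefore $H(R)=H(R')$ for any two such points, and it suffices to prove $H(P_{\infty}) = \langle n^3 - n^2 + n, n^3, n^3+1 \rangle$.

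First I would establish the inclusion $\supseteq$ by reading the three generators directly off the principal divisors in (\ref{divisors}). Indeed, the functions $y$, $z$ and $x$ have $P_{\infty}$ as their only pole, with pole orders $n(n^2-n+1)=n^3-n^2+n$, $n^3$ and $n^3+1$ respectively (the last coming from $(x-a_j)$, which differs from $x$ by a constant). Thus these three integers lie in $H(P_{\infty})$, and since $H(P_{\infty})$ is closed under addition, the whole numerical semigroup $S:=\langle n^3-n^2+n, n^3, n^3+1\rangle$ is contained in $H(P_{\infty})$.

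It remains to prove the reverse inclusion, and here the key idea is a gap count. Since $H(P_{\infty})$ has exactly $g$ gaps (the Weierstrass Gap Theorem) and the inclusion $S\subseteq H(P_{\infty})$ yields $G(P_{\infty})\subseteq \mathbb{N}_0\setminus S$, it suffices to show that $S$ also has exactly $g=\tfrac{1}{2}(n^3+1)(n^2-2)+1$ gaps; equality of these two finite gap sets then forces $S=H(P_{\infty})$. To count the gaps of $S$ I would compute its Ap\'ery set with respect to $n^3$: using that $n^3+1\equiv 1$ and $n^3-n^2+n\equiv -(n^2-n) \pmod{n^3}$, one determines for each residue class modulo $n^3$ the least element of $S$ in that class, written as a value $\alpha(n^3-n^2+n)+\gamma(n^3+1)$ with $\alpha,\gamma\geq 0$, and then applies Selmer's formula for the number of gaps.

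The main obstacle is precisely this combinatorial step: organizing the residue classes modulo $n^3$ so as to obtain the minimal representatives in closed form and summing them to check that the total matches $g$. A convenient way to carry this out is to treat the two-parameter reachability of each residue class (incrementing by $1$ for each copy of $n^3+1$ and by $-(n^2-n)$ for each copy of $n^3-n^2+n$) as a short lattice optimization, which should yield the Ap\'ery elements explicitly. Once the gap count of $S$ is confirmed to equal $g$, the equality $S=H(P_{\infty})$ follows, and the transitivity argument of the first paragraph completes the proof.
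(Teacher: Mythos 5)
The paper itself gives no proof of this proposition---it is quoted verbatim from \cite[Proposition 3.1]{GKcodes}---and your argument is essentially the proof given in that reference: reduce everything to $P_{\infty}$ via the transitive action of $Aut(GK)$ on $\mathcal{O}_1$ (Theorem \ref{theorem bitransitive}), read the generators $n^3-n^2+n$, $n^3$, $n^3+1$ off the pole divisors of $y$, $z$ and $x-a_j$ in (\ref{divisors}), and conclude by checking that the numerical semigroup $S$ they generate has exactly $g=\frac{1}{2}(n^3+1)(n^2-2)+1$ gaps, which forces $S=H(P_{\infty})$. The one step you defer, the Ap\'ery/Selmer count, is the computational crux but does check out (e.g.\ for $n=2$ one gets $S=\langle 6,8,9\rangle$ with exactly $10=g$ gaps, matching the paper's example), so your plan is sound and matches the cited proof.
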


For more details about the $GK$ curve, see \cite{GK}.

\section{The Weierstrass semigroup at certain $m+1$ points on $GK$ curve}

In this section we will determine the Weierstrass semigroup $H(P_{\infty},P_1,\ldots, P_m)$, for $1\leq m\leq n$. To simplify the notation, we will denote $a=n^2-n+1$, $b=n^3$ and $c=n^3+1$. By the divisors of the rational functions $(x-a_j)$, $y$ and $z$ given in (\ref{divisors}), we have the following equivalences

\begin{equation}\label{eqx}
cP_j \sim cP_\infty\;,
\end{equation}
\begin{equation}\label{eqy}
aP_1+\cdots+aP_{n}\sim naP_\infty \;,
\end{equation}
\begin{equation}\label{eqz}
P_1+\cdots+P_{n}+Q_1+\cdots +Q_{n^3-n}\sim bP_\infty\;.
\end{equation}

Let $1\leq m\leq n$ and let $1\leq k\leq a$, $0\leq i\leq n$ and $j_s\geq 0$ be integers such that $$\left(n^2-m-\sum_{s=1}^m j_s\right)c-ina-kb>0\;.$$

So, the divisor
\begin{equation} \label{Alinha}
A'=((n^2-m)c-ina-kb)P_\infty+\sum_{s=1}^m(ia+k)P_s
\end{equation}

is effective and using (\ref{eqx}), (\ref{eqy}) and (\ref{eqz}) we have that

\begin{equation}\label{eq divisors equiv}
((n^2-m-\sum_{s=1}^m j_s)c-ina-kb)P_\infty +\sum_{s=1}^m(j_s c+ia+k)P_s\sim A'.
\end{equation}

The following lemma is important to show that a divisor is a discrepancy.

\begin{lemma}\cite[Noether's Reduction Lemma]{fulton} \label{lemma noether}
Let $D$ be a divisor, $P \in \mathcal{X}$ and let $K$ be a canonical divisor. If $\dim(\mathcal{L}(D))>0$ and $\dim(\mathcal{L}(K-D-P)) \neq \dim(\mathcal{L}(K-D))$, then $\dim(\mathcal{L}(D+P))=\dim(\mathcal{L}(D))$.
\end{lemma}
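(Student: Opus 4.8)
The plan is to derive the statement directly from the Riemann-Roch theorem, which reads
\[
\dim(\mathcal{L}(D)) - \dim(\mathcal{L}(K-D)) = \deg(D) + 1 - g
\]
for every divisor $D$, where $g$ is the genus of $\mathcal{X}$. I would write this identity both for $D$ and for $D+P$ and compare the two.

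First I would record the elementary fact that appending a single rational point to a divisor can raise the dimension of the associated Riemann-Roch space by at most one: for any divisor $E$ one has $\dim(\mathcal{L}(E)) \leq \dim(\mathcal{L}(E+P)) \leq \dim(\mathcal{L}(E)) + 1$, a standard consequence of the inclusion $\mathcal{L}(E) \subseteq \mathcal{L}(E+P)$, whose quotient has dimension at most one. Applying this to $E = K - D - P$, so that $E + P = K - D$, the hypothesis $\dim(\mathcal{L}(K-D-P)) \neq \dim(\mathcal{L}(K-D))$ forces the gap to be exactly one and in the direction of increase; that is, $\dim(\mathcal{L}(K-D)) = \dim(\mathcal{L}(K-D-P)) + 1$.

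Next I would subtract the two Riemann-Roch identities. Using $\deg(D+P) = \deg(D) + 1$ and $K - (D+P) = K - D - P$, the difference gives
\[
\dim(\mathcal{L}(D+P)) - \dim(\mathcal{L}(D)) = 1 + \dim(\mathcal{L}(K-D-P)) - \dim(\mathcal{L}(K-D)).
\]
Substituting the relation obtained in the previous step makes the right-hand side equal to $1 + (-1) = 0$, whence $\dim(\mathcal{L}(D+P)) = \dim(\mathcal{L}(D))$, as desired.

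I do not expect a serious obstacle, since the argument is a short bookkeeping with Riemann-Roch; the only point requiring care is the justification that the two unequal dimensions in the hypothesis differ by exactly one and in the correct direction, which is precisely where the at-most-one-jump inequality enters. I note that the hypothesis $\dim(\mathcal{L}(D)) > 0$ is not actually needed in this Riemann-Roch derivation; it belongs to Fulton's formulation, which is established by a more geometric argument internal to the development leading up to Riemann-Roch, but the version sketched above is sufficient for the applications to discrepancies in what follows.
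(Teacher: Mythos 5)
Your argument is correct, and it is worth noting that the paper itself offers no proof of this statement at all: the lemma is quoted from Fulton's \emph{Algebraic Curves}, where it appears \emph{before} the Riemann--Roch theorem and is in fact an ingredient in Fulton's proof of that theorem. Fulton's argument is therefore necessarily more elementary and geometric: it replaces $D$ by a linearly equivalent effective divisor (this is exactly where the hypothesis $\dim(\mathcal{L}(D))>0$ is used) and invokes Noether's reduction via adjoint curves. Your route instead treats the lemma as a corollary of Riemann--Roch: the one-jump inequality $\dim(\mathcal{L}(E)) \leq \dim(\mathcal{L}(E+P)) \leq \dim(\mathcal{L}(E))+1$ applied to $E = K-D-P$ upgrades the hypothesis to the exact relation $\dim(\mathcal{L}(K-D)) = \dim(\mathcal{L}(K-D-P))+1$, and subtracting the Riemann--Roch identities for $D$ and $D+P$ gives $\dim(\mathcal{L}(D+P)) - \dim(\mathcal{L}(D)) = 1 + \dim(\mathcal{L}(K-D-P)) - \dim(\mathcal{L}(K-D)) = 0$. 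This is a complete and correct deduction; it would be circular inside Fulton's own development, but in the present paper Riemann--Roch is freely available and the lemma serves only as a working tool in Proposition \ref{prop discrepancy}, Corollary \ref{exemplo pure gaps} and Proposition \ref{lemma pure gaps 2}, so no circularity arises. Your closing observation is also accurate: the hypothesis $\dim(\mathcal{L}(D))>0$ plays no role in the Riemann--Roch derivation and belongs only to Fulton's formulation; what each approach buys is clear --- Fulton's proof works without Riemann--Roch at the cost of the extra hypothesis and a longer geometric argument, while yours is shorter, sharper (it drops a hypothesis), but logically downstream of Riemann--Roch.
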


\begin{proposition} \label{prop discrepancy}
The divisor $A'$ is a discrepancy with respect to $P$ and $Q$ for any two distinct points $P,Q\in\{P_{\infty},P_1,\ldots,P_m\}$.
\end{proposition}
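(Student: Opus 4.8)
The plan is to verify, for the divisor $A'$ and each pair of distinct points $P,Q\in\{P_\infty,P_1,\ldots,P_m\}$, the four conditions of Definition \ref{defi discrepancy}. Since $\dim\mathcal{L}(M)-\dim\mathcal{L}(M-R)\in\{0,1\}$ for any divisor $M$ and point $R$, these conditions amount to two \emph{inequality} statements $\mathcal{L}(A')\neq\mathcal{L}(A'-P)$, $\mathcal{L}(A')\neq\mathcal{L}(A'-Q)$ (each relevant point contributes a dimension jump) together with two \emph{equality} statements $\mathcal{L}(A'-P)=\mathcal{L}(A'-P-Q)$, $\mathcal{L}(A'-Q)=\mathcal{L}(A'-P-Q)$ (after removing one point, the other becomes a base point). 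Because the coefficients of $P_1,\ldots,P_m$ in $A'$ are all equal to $\beta:=ia+k$ and the construction is symmetric under relabelling these points, it suffices to treat two representative pairs: $\{P_\infty,P_1\}$ and $\{P_1,P_2\}$.

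For the inequalities I would exhibit a single function realizing the maximal pole order at every point of $\mathrm{supp}(A')$ simultaneously. Set $\alpha:=(n^2-m)c-ina-kb$, the coefficient of $P_\infty$ in $A'$, and consider
\[
f'=\frac{y^{\,n-i}\,z^{\,a-k}}{\prod_{s=1}^{m}(x-a_s)}\,,
\]
which is well defined since $0\le i\le n$ and $1\le k\le a$. Using the divisors in (\ref{divisors}) one computes $\mathrm{ord}_{P_\infty}(f')=-(n-i)na-(a-k)b+mc$; the identity $n^2a+ab=n^2c$ (equivalently $a(n+1)=c$) reduces this to exactly $-\alpha$. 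Likewise $\mathrm{ord}_{P_s}(f')=(n+1-i)a-k-c=-\beta$ for $1\le s\le m$, while $\mathrm{ord}_{P_j}(f')=c-\beta\ge 0$ for $m<j\le n$ and $\mathrm{ord}_{Q_\ell}(f')=a-k\ge 0$. Hence $(f')_\infty=A'$, so $f'\in\mathcal{L}(A')$ and $f'$ has a pole of order exactly equal to the corresponding coefficient of $A'$ at each of $P_\infty,P_1,\ldots,P_m$. Consequently $f'\in\mathcal{L}(A')\setminus\mathcal{L}(A'-R)$ for every $R\in\{P_\infty,P_1,\ldots,P_m\}$, which gives both inequalities in either case at once.

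For the equalities I would invoke Noether's Reduction Lemma (Lemma \ref{lemma noether}) with $D=A'-P-Q$. Since every coefficient of $A'$ is positive ($\alpha>0$ is the standing hypothesis and $\beta\ge 1$), the divisor $A'-P-Q$ is effective, so $\dim\mathcal{L}(D)>0$. The conclusion $\dim\mathcal{L}(D+Q)=\dim\mathcal{L}(D)$ is precisely $\mathcal{L}(A'-P)=\mathcal{L}(A'-P-Q)$, while its hypothesis $\dim\mathcal{L}(K-D-Q)\neq\dim\mathcal{L}(K-D)$ asks for a differential $\eta$ with $\mathrm{div}(\eta)\ge A'-P-Q$ whose order at $Q$ equals exactly the $Q$-coefficient of $A'-P-Q$. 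I would produce such an $\eta$ by fixing an explicit canonical divisor (for instance a multiple of $P_\infty$, using that $H(P_\infty)$ is symmetric, or the divisor of a concrete regular differential) and building the required element of the complementary space $\mathcal{L}(K-A'+P)$ as a monomial in $x,y,z$ divided by a product of factors $(x-a_s)$, in direct analogy with $f'$; exchanging the roles of $P$ and $Q$ then yields the second equality. The main obstacle is exactly this last step: pinning down the canonical class on $GK$ and carrying out the pole--zero bookkeeping of the complementary differential, which must be done separately according to whether $P_\infty$ lies in $\{P,Q\}$ or not, since the coefficients (and hence the forced vanishings at $P_3,\ldots,P_m$) differ in the two cases. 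By contrast the inequality half is immediate once $f'$ is found, so constructing $f'$ and its companion differential is the technical heart of the proof.
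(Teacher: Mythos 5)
Your setup and your inequality half are correct, and they coincide with the paper's: your $f'$ is exactly the function implicit in the paper's equivalence (\ref{eq divisors equiv}) (it is the case $j_s=0$ of the function $f$ written out later in the proof of Theorem \ref{maintheorem}), your order computations check out via $c=(n+1)a$, and $(f')_\infty=A'$ indeed yields both strict inequalities for every pair simultaneously. Your reduction of the two equalities to Noether's Reduction Lemma with $D=A'-P-Q$ is also the paper's frame. One economy you miss: the paper verifies only \emph{one} equality per pair, say $\mathcal{L}(A'-Q)=\mathcal{L}(A'-P-Q)$, and then obtains $\mathcal{L}(A'-P)=\mathcal{L}(A'-P-Q)$ for free by a dimension count, since $\dim\mathcal{L}(A'-P)$ is squeezed between $\dim\mathcal{L}(A'-P-Q)=\dim\mathcal{L}(A')-1$ and $\dim\mathcal{L}(A')-1$; no role-swapped second differential is needed.

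The genuine gap is the step you yourself flag as ``the main obstacle'': you never pin down the canonical divisor and never produce the complementary functions, and that is precisely where the content of the proposition lies. The paper takes $K=(n^2-2)cP_\infty$ (note $2g-2=(n^3+1)(n^2-2)=(n^2-2)c$; that $(2g-2)P_\infty$ is canonical is the point your phrase ``using that $H(P_\infty)$ is symmetric'' would still have to justify) and then exhibits the needed elements explicitly: for $P=P_\infty$, $Q=P_1$, one computes $K+P+Q-A'=(c(m-1)+ina+(k-1)b)P_\infty-(ia+k-1)P_1-\sum_{s=2}^{m}(ia+k)P_s$ and checks that $z^{k-1}y^i(x-a_2)\cdots(x-a_m)\in\mathcal{L}(K+P+Q-A')\setminus\mathcal{L}(K+Q-A')$, while for $P=P_1$, $Q=P_2$ the function is $z^{k-1}y^i(x-a_3)\cdots(x-a_m)$. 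Your guessed shape --- ``a monomial in $x,y,z$ \emph{divided} by a product of factors $(x-a_s)$'' --- points the wrong way: since $K+P+Q-A'$ has negative coefficients at the $P_s$, the required function must \emph{vanish} there, so the factors $(x-a_s)$ appear in the numerator, and the hypotheses $k\geq 1$, $i\geq 0$ are exactly what make $z^{k-1}y^i$ admissible (with exactness occurring at $P_\infty$ in the first case and at $P_1$ in the second). Until these functions are written down and their membership verified, the equality half of Definition \ref{defi discrepancy} --- and hence the discrepancy claim --- remains unproved; the rest of your write-up is correct scaffolding around this missing verification.
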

\begin{Proof}
First, note that the equivalence of effective divisors in (\ref{eq divisors equiv}) gives a rational
function $f \in \mathcal{L}(A')$ with pole divisor equal to $A'$. Thus, $\mathcal{L}(A') \neq \mathcal{L}(A'-P)$ for all $P \in \{ P_{\infty},P_1,\ldots,P_m \}$.

Now, we must prove that $\mathcal{L}(A'-P) = \mathcal{L}(A'-P-Q)$ and $\mathcal{L}(A'-Q) = \mathcal{L}(A'-Q-P)$, for all $P,Q\in\{P_{\infty},P_1,\ldots,P_m\}$. By Lemma \ref{lemma noether}, it suffices to prove that $\mathcal{L}(K-A'+P)\neq\mathcal{L}(K-A'+P+Q)$, where $K$ is a canonical divisor. Taking $K=(n^2-2)c P_{\infty}$, we have that
$$
\begin{array}{ll}
K+P+Q-A'& \displaystyle = (n^2-2)cP_\infty+P+Q-((n^2-m)c-ina-kb)P_\infty - \sum_{s=1}^m(ia+k)P_s \\
         & \displaystyle = (c(m-2)+ina+kb)P_\infty + P + Q- \sum_{s=1}^m(ia+k)P_s
\end{array}
$$
If $P_{\infty} \in \{ P, Q \}$, without loss of generality, assume that $P=P_{\infty}$ and $Q=P_1$. Thus,
$$
K+P+Q-A'=(c(m-1)+ina+(k-1)b)P_\infty - (ia+k-1)P_1 - \sum_{s=2}^m (ia+k)P_s
$$

and we have that \[z^{k-1}y^i(x-a_2)\cdots (x-a_m)\in \mathcal{L}(K+P+Q-A')\setminus \mathcal{L}(K+Q-A')\;.\] So, $\mathcal{L}(A'-Q) = \mathcal{L}(A'-P-Q)$.
Since $\mathcal{L}(A') \neq \mathcal{L}(A'-Q) = \mathcal{L}(A'-P-Q)$ and $\mathcal{L}(A') \neq \mathcal{L}(A'-P)$, it follows that $\mathcal{L}(A'-P) = \mathcal{L}(A'-P-Q).$

If $P_{\infty} \not \in \{ P, Q \}$, we can suppose that $P=P_1$ and $Q=P_2.$ In this case, we have that
$$
z^{k-1}y^i(x-a_3)\cdots (x-a_m)\in \mathcal{L}(K+P+Q-A')\setminus \mathcal{L}(K+Q-A')\;.
$$ As above, it follows that $\mathcal{L}(A'-Q) = \mathcal{L}(A'-P-Q)$ and that $\mathcal{L}(A'-P) = \mathcal{L}(A'-P-Q).$

Therefore, $A'$ is a discrepancy with respect to $P$ and $Q$ for any two distinct points $P,Q\in\{P_{\infty},P_1,\ldots,P_m\}$.
\end{Proof}

\begin{remark} \label{remark 1}
From (\ref{eq divisors equiv}) and Definition \ref{defi discrepancy} follows that the divisor $((n^2-m-\sum_{s=1}^m j_s)c-ina-kb)P_\infty +\sum_{s=1}^m(j_s c+ia+k)P_s$ is also a discrepancy with respect to $P$ and $Q$ for any two distinct points $P,Q\in\{P_{\infty},P_1,\ldots,P_m\}$.
\end{remark}

\begin{theorem}\label{maintheorem}
Let $a$, $b$, $c$, $P_{\infty}, P_1, \ldots, P_{m}$ be as above. For $1 \leq m \leq n$, let
$$
\begin{array}{ll}
\Gamma_{m+1} & =\left\{  \left( (n^2-m-\sum_{s=1}^m j_s)c-ina-kb, j_1 c+ia+k, \ldots , j_m c+ia+k \right) \right.; \\
                             & \left. 1\leq k\leq a \mbox{, } 0\leq i\leq n \mbox{, } j_s\geq 0 \mbox{ and } \left(n^2-m-\sum_{s=1}^m j_s\right)c-ina-kb>0 \right\}.
\end{array}
$$

Then, $\Gamma(P_{\infty},P_1,\ldots,P_{m}) = \Gamma_{m+1}$.
\end{theorem}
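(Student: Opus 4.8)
The plan is to prove the two inclusions $\Gamma_{m+1}\subseteq \Gamma(P_\infty,P_1,\ldots,P_m)$ and $\Gamma(P_\infty,P_1,\ldots,P_m)\subseteq \Gamma_{m+1}$ separately, using Lemma \ref{lemma discrepancy} as the bridge between membership in $\Gamma$ and the discrepancy property. For the forward inclusion, fix $\mathbf{n}\in\Gamma_{m+1}$ with parameters $(i,k,j_1,\ldots,j_m)$, and let $D_{\mathbf{n}}=n_0P_\infty+\sum_{s=1}^m n_sP_s$ be the divisor whose coefficients are the entries of $\mathbf{n}$; this is exactly the left-hand divisor of (\ref{eq divisors equiv}). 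All entries are positive (the first by the standing hypothesis, the others since $j_sc+ia+k\ge k\ge 1$), so $\mathbf{n}\in\mathbb{N}^{m+1}$. By Remark \ref{remark 1}, $D_{\mathbf{n}}$ is a discrepancy at every pair of points of $\{P_\infty,P_1,\ldots,P_m\}$; in particular $\mathcal{L}(D_{\mathbf{n}})\neq\mathcal{L}(D_{\mathbf{n}}-P)$ for each such $P$, and since the field is large a generic element of $\mathcal{L}(D_{\mathbf{n}})$ has pole divisor exactly $D_{\mathbf{n}}$, giving $\mathbf{n}\in H(P_\infty,P_1,\ldots,P_m)$. Lemma \ref{lemma discrepancy} then yields $\mathbf{n}\in\Gamma(P_\infty,P_1,\ldots,P_m)$.

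The reverse inclusion is the substantial part. Take $\mathbf{n}=(n_0,n_1,\ldots,n_m)\in\Gamma(P_\infty,P_1,\ldots,P_m)$. By Lemma \ref{mPoints} together with Proposition \ref{proposition H(P)}, each coordinate is a nonzero gap of the numerical semigroup $S=\langle na,b,c\rangle$. The first step is a normal form for integers: using the identity $c=(n+1)a$ (immediate from $a=n^2-n+1$ and $c=n^3+1$), every positive integer $t$ has a unique representation $t=jc+ia+k$ with $j\ge 0$, $0\le i\le n$ and $1\le k\le a$, obtained by writing $t=Pa+k$ with $1\le k\le a$ and then expanding $P=j(n+1)+i$ in base $n+1$. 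I would apply this to write $n_s=j_sc+i_sa+k_s$ for each $s\in\{1,\ldots,m\}$.

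What remains is to show that these representations are forced to align, namely $i_1=\cdots=i_m=:i$ and $k_1=\cdots=k_m=:k$, and that then $n_0=(n^2-m-\sum_s j_s)c-ina-kb$, with the ranges $1\le k\le a$, $0\le i\le n$ matching the definition of $\Gamma_{m+1}$ and the positivity of $n_0$ being exactly the gap condition at $P_\infty$. I would extract these relations from the discrepancy property supplied by Lemma \ref{lemma discrepancy}: for each pair $(P_s,P_{s'})$ and each pair $(P_\infty,P_s)$, the condition $\mathcal{L}(A)\neq\mathcal{L}(A-P)=\mathcal{L}(A-P-Q)$ translates, via Noether's Reduction Lemma \ref{lemma noether} and the canonical divisor $K=(n^2-2)cP_\infty$, into the (non)existence of explicit functions of the shape $z^{k-1}y^i\prod_s(x-a_s)^{\ast}$ in prescribed Riemann--Roch spaces, exactly as in the proof of Proposition \ref{prop discrepancy}. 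Reading off their pole and zero orders at $P_\infty$ and at the $P_s$ from the divisors (\ref{divisors}) should pin the parameters down. I expect the main obstacle to be precisely this bookkeeping: proving that minimality of $\mathbf{n}$ in each $\nabla_i(\mathbf{n})$ (Proposition \ref{minimal}) forbids the $i_s,k_s$ from differing across coordinates and forces the displayed value of $n_0$, all while keeping the parameters inside the stated ranges. As an alternative way to close the argument, once $\Gamma_{m+1}\subseteq\Gamma$ is established it suffices to check that the two sets have equal cardinality; counting the tuples $(i,k,j_1,\ldots,j_m)$ subject to $\left(n^2-m-\sum_s j_s\right)c-ina-kb>0$ gives a direct handle on $|\Gamma_{m+1}|$, which for $m=1$ equals the genus $g$, and this can be matched against $|\Gamma(P_\infty,P_1,\ldots,P_m)|$.
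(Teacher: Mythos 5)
Your forward inclusion is essentially the paper's: the divisor with coefficient vector $\mathbf{n}$ is the left-hand side of (\ref{eq divisors equiv}), it is a discrepancy by Remark \ref{remark 1}, and Lemma \ref{lemma discrepancy} converts this into $\mathbf{n}\in\Gamma(P_\infty,P_1,\ldots,P_m)$. One small simplification: you do not need the genericity argument to get $\mathbf{n}\in H(P_\infty,P_1,\ldots,P_m)$, since the equivalence (\ref{eq divisors equiv}) already exhibits an explicit function, namely $y^{n-i}z^{a-k}\prod_{s=1}^m(x-a_s)^{-(j_s+1)}$, whose pole divisor is exactly $D_{\mathbf{n}}$. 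Your normal-form observation — every positive integer is uniquely $t=jc+ia+k$ with $j\geq 0$, $0\leq i\leq n$, $1\leq k\leq a$, because $c=(n+1)a$ — is correct and usefully makes precise a step the paper states without justification.

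The reverse inclusion, however, contains a genuine gap: you reduce everything to showing that the representations align ($i_1=\cdots=i_m$, $k_1=\cdots=k_m$, and $n_0$ equal to the displayed value), but the only argument offered is that unwinding the discrepancy property of $A=n_0P_\infty+\sum_{s=1}^m n_sP_s$ through Noether reduction ``should pin the parameters down.'' That alignment is precisely the assertion to be proved, and it is not visible how pairwise reductions applied to the divisor built from $\mathbf{n}$ itself would ever detect the digits $i_s,k_s$ of each coordinate. The paper closes this with a comparison device absent from your plan: from the representations $n_s=j_sc+i_sa+k_s$ it manufactures the function $f=y^{n-i}z^{a-k}/\prod_{s=1}^m(x-a_s)^{j_s+1}$ (with $(i,k)$ taken from one fixed coordinate), so that the pole vector $\mathbf{f}$ of $f$ lies in $\Gamma_{m+1}$, hence in $\Gamma(P_\infty,P_1,\ldots,P_m)$ by the inclusion already proved, and agrees with $\mathbf{n}$ in some coordinate $r$; then minimality of both $\mathbf{n}$ and $\mathbf{f}$ in $\nabla_r(\mathbf{n})$ (Proposition \ref{minimal}) forces $\mathbf{f}=\mathbf{n}$, which is exactly the alignment you wanted. (Even that final step deserves care, since two distinct elements of $\Gamma$ can share a coordinate and both be minimal — compare $(1,1,10)$ and $(1,10,1)$ in the paper's $n=2$ example — but it is the idea your proposal lacks.) Your fallback via cardinalities cannot rescue the argument for $m\geq 2$: there is no independent determination of $|\Gamma(P_\infty,P_1,\ldots,P_m)|$ to match $|\Gamma_{m+1}|$ against — the count $|\Gamma|=g$ is special to pairs of points, and indeed in the $n=2$ example $|\Gamma(P_\infty,P_1,P_2)|=5$ while $g=10$ — so that route is circular.
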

\begin{Proof}
By Proposition \ref{prop discrepancy}, $A'$ is a discrepancy with respect to $P$ and $Q$ for any two distinct points $P,Q\in\{P_{\infty}, P_1,\ldots,P_m\}$. By Remark \ref{remark 1}, the divisor
$A=((n^2-m-\sum_{s=1}^m j_s)c-ina-kb)P_\infty +\sum_{s=1}^m(j_s c+ia+k)P_s$
is a discrepancy with respect to $P$ and $Q$ for any two distinct points $P,Q\in\{P_{\infty}, P_1,\ldots,P_m\}$. Therefore, by Lemma \ref{lemma discrepancy}, we have that $\Gamma_{m+1} \subseteq \Gamma(P_{\infty}, P_1,\ldots,P_{m})$.

Next, we show that $\Gamma(P_{\infty},P_1,\ldots,P_{m})\subseteq \Gamma_{m+1}$. Let $\mathbf{n} = (n_0, n_1 , \ldots , n_m)\in \Gamma(P_{\infty}, P_1,\ldots,P_{m})$. By Definition \ref{defi Gamma} and Proposition \ref{minimal}, follows that $\mathbf{n}$ is minimal in $\nabla_r(\mathbf{n})$ for all $r$, $1 \leq r \leq m+1$. By Lemma \ref{mPoints},
$\mathbf{n} = (n_0, n_1, \ldots, n_m)\in G(P_{\infty}) \times G(P_1)\times\cdots\times G(P_m)$.

Note that $H(P_s)= \langle an , b , c \rangle$, for all $1 \leq s \leq m$, and then $n_s=j_s c + i_s a + k_s$, for some $j_s \geq 0$, $0 \leq i_s \leq n$ and $1 \leq k_s \leq a$. Let
$$f=\dfrac{y^{n-i} z^{a-k}}{(x-a_1)^{j_1 + 1} \ldots (x-a_m)^{j_m + 1}}.$$
Then, $(f)_{\infty} = ((n^2-m-\sum_{s=1}^m j_s)c-ina-kb)P_{\infty} + (j_1 c+ia+k)P_1 + \cdots + (j_m c+ia+k)P_m$. Now, by Remark \ref{remark 1}, $(f)_{\infty}$ is a discrepancy with respect to $P$ and $Q$ for any two distinct points $P,Q\in\{P_{\infty},P_1,\ldots,P_m\}$ and so, by Lemma \ref{lemma discrepancy},

 $\mathbf{f} = ( (n^2-m-\sum_{s=1}^m j_s)c-ina-kb, j_1 c+ia+k, \ldots , j_m c+ia+k ) \in \Gamma(P_{\infty}, P_1, \ldots , P_{m})$.

 Thus, $\mathbf{f} \in \nabla_r ({\bf n})$, for some $1 \leq r \leq m+1$, and, by Proposition \ref{minimal}, it follows that $\mathbf{f}$ is minimal in $\nabla_r(\mathbf{n})$ for all $r$, $1 \leq r \leq m+1$. Therefore, by minimality of $\mathbf{f}$ and $\mathbf{n}$, we have that $\mathbf{f}=\mathbf{n}$ and $\Gamma(P_{\infty}, P_1,\ldots,P_{m}) \subseteq \Gamma_{m+1}$.

\end{Proof}

\begin{example}
For $n=2$, we have $a=3, b=8,c=9$, and the curve $GK$ with affine equations

\begin{equation} \label{equation GK n 2}
\left\{ \begin{array}{c}
  Z^{3} = Y (1+X+X^2) \\
  X^{2} + X = Y^{3}
\end{array} \right.
\end{equation}

In this case, the genus $g=10$ and, by Proposition \ref{proposition H(P)}, $H(P_1)=H(P_2)=H(P_{\infty}) = \langle 6,8,9 \rangle$ and then $G(P_{0}) = G(P_{\infty}) = \{ 1,2,3,4,5,7,10,11,13,19 \}$. We have the following divisors
\[
\begin{array}{ccl}
(x-a_\ell)& = & 9P_\ell-9P_\infty\,; \\
(y)& = & 3P_1+3P_2-6P_\infty\,; \\
(z)& = & P_1+P_2+Q_1+\cdots + Q_6-8P_\infty\,,
\end{array}
\]

where $\ell=1,2$. For this curve, taking $m=1$, by Theorem \ref{maintheorem}, we have that $\Gamma(P_\infty,P_1)=\{(9(3-j_1)-6i-8k,9j_1+3i+k):0\leq i \leq 2, 1\leq k \leq 3, j_1\geq 0 \mbox{ and } 9(3-j_1)-6i-8k >0\}$,
therefore $$\Gamma(P_\infty,P_1)=\{(1,19),(2,11),(3,3),(4,13),(5,5),(7,7),(10,10),(11,2),(13,4),(19,1)\}\;.$$

Taking $m=2$, we have that
$$\Gamma(P_\infty,P_1,P_2)=\{(9(2-j_1-j_2)-6i-8k, 9j_1+3i+k,9j_2+3i+k):$$ $$ 0\leq i \leq 2, 1\leq k \leq 3, j_1,j_2\geq 0 \mbox{ and } 9(2-j_1-j_2)-6i-8k >0\}$$ then
\[\Gamma(P_\infty,P_1,P_2)=\{(10,1,1),(1,1,10),(1,10,1),(2,2,2),(4,4,4)\}\;.\]

\end{example}

Other different examples can be found explicitly in www.alonso.prof.ufu.br/Example.pdf.

\section{Pure gaps and codes over the $GK$ curve}

An element $(\alpha_1, \ldots,\alpha_m) \in G(P_{1}, \ldots , P_{m})$ is called \textit{pure gap} if $\dim (\mathcal{L}(\sum_{i=1}^m \alpha_i P_i)) = \dim (\mathcal{L}(\sum_{i=1}^m \alpha_i P_i - P_j))$, for all $j=1,\ldots,m$. This concept was introduced by M. Homma and S.J. Kim in \cite{homma}. In \cite{carvalho2}, C. Carvalho and F. Torres used the concept of pure gaps to obtain codes whose minimum distance have bounds better than the Goppa bound.

\medskip

\begin{theorem} \label{teorema Cicero} \cite[Theorem 3.3]{carvalho2}
Let $Q_1, \ldots, Q_n, P_1, \ldots , P_m$ be distinct $\mathbb{F}_{q}$-rational points of $\mathcal{X}$ and assume that $m \leq q$. Let $(\alpha_1, \ldots, \alpha_m), (\beta_1, \ldots, \beta_m)\in \mathbb{N}_0^{m}$ and set $D=Q_1 + \cdots + Q_n$ and $G=\sum_{i=1}^m (\alpha_i + \beta_i - 1)P_i$. Let $d_{\Omega}$ be the minimum distance of the code $C_{\Omega}(D,G)$. If  $(\alpha_1, \ldots, \alpha_m)$ and $(\beta_1, \ldots, \beta_m)$ are pure gaps at $P_1, \ldots , P_m$, then $d_{\Omega} \geq deg(G) - (2g - 2) + m$, where $g$ is the genus of $\mathcal{X}$.
\end{theorem}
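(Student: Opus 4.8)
The plan is to start from the classical Goppa bound $d_{\Omega} \geq \deg(G) - (2g-2)$, valid once $\deg(G) > 2g-2$, and to show that the two pure-gap hypotheses force an extra gain of $m$. The natural device is to reformulate the minimum distance as a vanishing condition. Using the duality $C_{\Omega}(D,G) = C_{\mathcal{L}}(D,G)^{\perp}$ together with the identity $\dim \Omega(B) = \dim \mathcal{L}(K-B)$ for a canonical divisor $K$, a nonzero codeword supported on a set $T \subseteq \{Q_1,\ldots,Q_n\}$ exists if and only if $\dim \mathcal{L}(K - G + D_T) > \dim \mathcal{L}(K - G)$, where $D_T = \sum_{Q \in T} Q$. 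Hence $d_{\Omega} \geq \deg(G) - (2g-2) + m$ will follow once I prove that $\dim \mathcal{L}(K - G + D_T) = \dim \mathcal{L}(K-G)$ for every effective $D_T \leq D$ with $\deg(D_T) \leq \deg(G) - (2g-2) + m - 1$.

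Since $\deg(G) > 2g-2$ we have $\dim \mathcal{L}(K-G) = 0$, so the goal reduces to showing $\dim \mathcal{L}(K - G + D_T) = 0$ for all such $D_T$; equivalently, that $K - G + D_T$ is not linearly equivalent to an effective divisor. I would argue by contradiction: if $K - G + D_T \sim E$ with $E \geq 0$, then $\deg(E) = (2g-2) - \deg(G) + \deg(D_T) \leq m-1$, and $K \sim G - D_T + E$ is a canonical divisor whose excess part $E$ has degree at most $m-1$.

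The heart of the argument is to convert the pure-gap data into counting information via Riemann--Roch. Writing $A = \sum_{i} \alpha_i P_i$, $B = \sum_{i} \beta_i P_i$ and $P = \sum_{j=1}^m P_j$, so that $G = A + B - P$, the pure-gap property of $(\alpha_i)$ (in the Homma--Kim form $\mathcal{L}(A) = \mathcal{L}(A-P)$) translates, through $\ell(A) - \ell(K-A) = \deg(A) - g + 1$, into $\dim \mathcal{L}(K - A + P) - \dim \mathcal{L}(K-A) = m$; symmetrically $(\beta_i)$ gives $\dim \mathcal{L}(K - B + P) - \dim \mathcal{L}(K - B) = m$. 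Each identity says that the $m$ points $P_1, \ldots, P_m$ produce $m$ independent dimension jumps on the canonical side. The plan is then to feed in the specific canonical divisor $K \sim G - D_T + E$ and show that realizing all $m$ jumps coming from the $\alpha$-condition simultaneously with the constraints imposed by the $\beta$-condition forces the excess divisor $E$ to contain each of $P_1, \ldots, P_m$, i.e. $\deg(E) \geq m$, contradicting $\deg(E) \leq m-1$.

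I expect the step built around this last combination to be the main obstacle. Each pure-gap condition individually is only a one-sided statement (it controls $\mathcal{L}(A)$ versus $\mathcal{L}(A-P)$, or the $\beta$-analogue), and a naive attempt to produce a contradiction by dividing a differential $\omega$ with $\mathrm{div}(\omega) \sim G - D_T + E$ by an auxiliary differential realizing a single gap fails, because such auxiliary differentials carry $2g-2$ zeros that are uncontrolled away from the $P_i$. The delicate point is therefore to combine the lower endpoint $(\alpha_i)$ and the upper endpoint $(\beta_i)$ so that their one-sided jumps reinforce rather than overlap; I would attempt this by an induction that peels off the points $P_1, \ldots, P_m$ one at a time, alternating the two pure-gap conditions to gain one unit of forced vanishing of $E$ at each step, and I expect the careful tracking of the per-point multiplicities of $E$ to be where the real work lies.
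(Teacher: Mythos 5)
You should first note that this paper does not prove Theorem \ref{teorema Cicero} at all: it is quoted verbatim from \cite[Theorem 3.3]{carvalho2} (Carvalho--Torres), so the comparison must be made against the proof in that reference, which in turn follows the two-point argument of Homma--Kim \cite{homma}. Your setup is correct and standard: the duality $C_{\Omega}(D,G)=C_{\mathcal{L}}(D,G)^{\perp}$, the reformulation that a low-weight codeword forces $\dim\mathcal{L}(K-G+D_T)>0$, the resulting effective excess divisor $E\sim K-G+D_T$ with $\deg(E)\leq m-1$, and the Riemann--Roch translation of each pure-gap hypothesis into a full jump of size $m$, $\dim\mathcal{L}(K-A+P)-\dim\mathcal{L}(K-A)=m$ (and likewise for $B$), are all accurate computations. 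But the proof stops exactly where the theorem lives: your final paragraph is a declaration of intent (``I would attempt this by an induction that peels off the points\dots I expect the careful tracking\dots to be where the real work lies''), not an argument. Moreover, the target you set yourself --- forcing $\mathrm{supp}(E)\supseteq\{P_1,\ldots,P_m\}$, hence $\deg(E)\geq m$ --- is the wrong logical direction and there is no evident mechanism for it: each pure-gap condition, fed through Riemann--Roch against the specific canonical class $K\sim G-D_T+E$, only produces statements of the form ``there exists $h\in\mathcal{L}(B+E-D_T-(P-P_{j}))$ with exact pole order $\beta_{j}$ at $P_{j}$,'' and multiplying such functions by the differential realizing the codeword yields differentials in $\Omega(A-P_{j})\setminus\Omega(A)$ --- which is \emph{consistent} with the pure-gap hypotheses rather than contradictory. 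So the naive one-sided manipulations you correctly identify as failing do fail, and no replacement is supplied.

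The cited proof uses the inequality $\deg(E)\leq m-1$ in the opposite way: since the effective excess divisor has degree at most $m-1$, one can \emph{choose} a point $P_{j_0}\notin\mathrm{supp}(E)$, so that the canonical divisor $W=\mathrm{div}(\eta)$ attached to the putative low-weight codeword satisfies $v_{P_{j_0}}(W)=\alpha_{j_0}+\beta_{j_0}-1$ exactly; the contradiction is then extracted at that clean point by combining $\eta$ with functions realizing the jumps coming from the other pure gap. A telltale sign that your plan is missing the decisive ingredient is that you never use the hypothesis $m\leq q$, which is not decorative: it is needed (as in \cite[Proposition 6]{gretchen1}, Proposition \ref{lubH} of this paper) to take $\mathbb{F}_q$-linear combinations producing a single function with the prescribed exact pole orders at all $m$ points simultaneously, avoiding at most $m-1$ bad leading-coefficient cancellations. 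Any completed proof along these lines must invoke it somewhere; a proposal in which it plays no role has not yet reached the heart of the theorem. As it stands, your write-up is an honest reduction of the problem to its known hard kernel, together with an accurate diagnosis of why the easy attacks fail --- but the kernel itself is unproven, so the proposal is incomplete.
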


Using the concept of discrepancy we have the following result to obtain pure gaps.

\begin{proposition}\label{lemma pure gaps}
Let $A=\sum_{\ell=0}^m a_{\ell}P_{\ell}$, where $(a_0, a_1,\ldots,a_m) \in \Gamma(P_0, P_1, \ldots, P_m)$. Let $\ell \in \{0,1,\ldots,m\}$, if $\mathcal{L}(A - P_{\ell}) = \mathcal{L}(A - 2P_{\ell})$, then

$(a_0,a_1,\ldots,a_{{\ell}-1},a_{\ell} - 1, a_{\ell+1},\ldots,a_m)$ is a pure gap of $H(P_0, P_1, \ldots , P_m)$.
\end{proposition}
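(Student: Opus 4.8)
The plan is to isolate the divisor $B = A - P_{\ell}$, which is precisely the divisor attached to the candidate tuple $(a_0, a_1, \ldots, a_{\ell-1}, a_{\ell}-1, a_{\ell+1}, \ldots, a_m)$, and then to verify the defining condition of a pure gap directly, namely that $\dim(\mathcal{L}(B)) = \dim(\mathcal{L}(B - P_j))$ for every $j \in \{0, 1, \ldots, m\}$. The whole argument rests on the observation that these $m+1$ equalities split naturally into two groups: the indices $j \neq \ell$, which are supplied by the discrepancy property that $A$ inherits from lying in $\Gamma$, and the single index $j = \ell$, which is exactly what the hypothesis provides.

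First I would use that $(a_0, \ldots, a_m) \in \Gamma(P_0, P_1, \ldots, P_m)$, so by Lemma \ref{lemma discrepancy} the divisor $A = \sum_{r=0}^m a_r P_r$ is a discrepancy with respect to any two distinct points of $\{P_0, \ldots, P_m\}$. Fixing $j \neq \ell$ and applying Definition \ref{defi discrepancy} to the pair $P_{\ell}, P_j$ gives, in particular, $\mathcal{L}(A - P_{\ell}) = \mathcal{L}(A - P_{\ell} - P_j)$. Since $B = A - P_{\ell}$, this is literally $\mathcal{L}(B) = \mathcal{L}(B - P_j)$, hence $\dim(\mathcal{L}(B)) = \dim(\mathcal{L}(B - P_j))$; as $j \neq \ell$ was arbitrary, every equality with $j \neq \ell$ holds. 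For the remaining index $j = \ell$, I would simply rewrite the hypothesis $\mathcal{L}(A - P_{\ell}) = \mathcal{L}(A - 2P_{\ell})$ as $\mathcal{L}(B) = \mathcal{L}(B - P_{\ell})$, yielding the last equality. Together these establish the pure-gap dimension condition for the shifted tuple.

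To finish I would confirm the two background points that the definition of pure gap quietly demands. The tuple lies in $\mathbb{N}_0^{m+1}$ because $\Gamma(P_0, \ldots, P_m) \subseteq \mathbb{N}^{m+1}$ by Definition \ref{defi Gamma}, so $a_{\ell} \geq 1$ and $a_{\ell} - 1 \geq 0$. Moreover, the shifted tuple genuinely belongs to the gap set $G(P_0, \ldots, P_m)$: the equality $\mathcal{L}(B) = \mathcal{L}(B - P_{\ell})$ forces every function in $\mathcal{L}(B)$ to have pole order at most $a_{\ell} - 2$ at $P_{\ell}$, so no rational function can have pole divisor exactly $B$, and hence the tuple is not in $H(P_0, \ldots, P_m)$.

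I expect no genuinely hard step here; once $B$ is identified the proof is essentially bookkeeping. The only point requiring care is to respect the asymmetry between $P_{\ell}$ and the other points: one must apply the discrepancy equality in the form $\mathcal{L}(A - P) = \mathcal{L}(A - P - Q)$ with $P = P_{\ell}$ (not $P = P_j$), so that the point subtracted to form $B$ matches the coordinate whose value was lowered by one.
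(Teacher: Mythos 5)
Your proposal is correct and takes essentially the same route as the paper's own proof: Lemma \ref{lemma discrepancy} turns membership in $\Gamma(P_0,\ldots,P_m)$ into the discrepancy property of $A$, which yields $\mathcal{L}(A-P_{\ell})=\mathcal{L}(A-P_{\ell}-P_j)$ for all $j\neq\ell$, and the hypothesis supplies the remaining equality for $j=\ell$. Your extra verification that the shifted tuple really lies in $G(P_0,\ldots,P_m)$ is a point the paper leaves implicit, and is welcome; just note that in the degenerate case $a_{\ell}=1$ your argument at $P_{\ell}$ does not rule out a function with pole divisor exactly $B$ (a function may vanish at $P_{\ell}$ and still have $(f)_{\infty}=B$), so there it is cleaner to get the gap property from the equality $\mathcal{L}(B)=\mathcal{L}(B-P_j)$ for some $j\neq\ell$, where $a_j\geq 1$.
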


\begin{proof}
In fact, by Lemma \ref{lemma discrepancy}, the divisor $A$ is a discrepancy with respect to $P$ and $Q$ for any two distinct points $P,Q\in\{P_0,P_1,\ldots,P_m\}$. So, $\mathcal{L}(A-P_{\ell})=\mathcal{L}(A-P_{\ell}-Q)$ for any $Q\in\{P_0,P_1,\ldots,P_{m}\} \setminus \{P_{\ell}\}$. Thus, if $\mathcal{L}(A - P_{\ell}) = \mathcal{L}(A - 2P_{\ell})$, by definition of pure gap, follows that $(a_0,a_1,\ldots,a_{\ell-1},a_{\ell} - 1, a_{\ell+1},\ldots,a_m)$ is a pure gap of $H(P_0, P_1, \ldots , P_m)$.
\end{proof}

For Corollary \ref{exemplo pure gaps} and Lemma \ref{lemma pure gaps 2} in the following, consider the $GK$ curve over $\mathbb{F}_{n^6}$ with genus $g$. We remember that $a=n^2-n+1$, $b=n^3$, $c=n^3+1$ and $1\leq m \leq n$, and that $P_{\infty},P_1,\ldots,P_m$ are the rational points given in Section 3.

\begin{corollary} \label{exemplo pure gaps}
If $2 \leq k \leq a$, then $((n^2 - m)c - kb , k, \ldots , k, k-1)$ is a pure gap of Weierstrass semigroup $H(P_{\infty},P_1,\ldots,P_m)$ on $GK$.
\end{corollary}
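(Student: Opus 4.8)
The plan is to realize the target vector as a one–step descent from an explicit element of the minimal generating set and then invoke Proposition \ref{lemma pure gaps} (with $P_0=P_\infty$). First I would locate the parent in $\Gamma_{m+1}$: specializing Theorem \ref{maintheorem} to $i=0$ and $j_1=\cdots=j_m=0$, with the given $k$, produces the point $\big((n^2-m)c-kb,\,k,\ldots,k\big)\in\Gamma(P_\infty,P_1,\ldots,P_m)$, which is legitimate because the defining inequality $(n^2-m)c-kb>0$ holds in the stated range. Put $A:=\big((n^2-m)c-kb\big)P_\infty+k\sum_{s=1}^m P_s$. Since the target vector is obtained from this parent by lowering precisely the last coordinate from $k$ to $k-1$, Proposition \ref{lemma pure gaps} applied with $\ell=m$ reduces the whole corollary to a single equality, namely $\mathcal{L}(A-P_m)=\mathcal{L}(A-2P_m)$.

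To prove this equality I would argue exactly as in Proposition \ref{prop discrepancy}, through Noether's Reduction Lemma (Lemma \ref{lemma noether}) applied to $D=A-2P_m$, $P=P_m$, and the canonical divisor $K=(n^2-2)cP_\infty$. The hypothesis $\dim\mathcal{L}(A-2P_m)>0$ is immediate because $A-2P_m$ is effective; this is where $k\geq 2$ enters, ensuring the coefficient $k-2$ of $P_m$ is non-negative. It then remains to verify $\dim\mathcal{L}(K-A+P_m)\neq\dim\mathcal{L}(K-A+2P_m)$, which I would establish by producing an explicit function separating the two spaces.

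Computing $K-A+2P_m=\big((m-2)c+kb\big)P_\infty-k\sum_{s=1}^{m-1}P_s-(k-2)P_m$, the natural candidate, by analogy with the monomials used in Proposition \ref{prop discrepancy}, is
\[
h=z^{\,k-2}\,(x-a_1)\cdots(x-a_{m-1}).
\]
Using the divisor formulas (\ref{divisors}) I would check that $h$ vanishes to order exactly $k-2$ at $P_m$ (the factors $x-a_s$ being nonzero there), to order at least $k$ at each $P_1,\ldots,P_{m-1}$ (where $z^{\,k-2}$ contributes $k-2$ and $x-a_s$ contributes $c\geq 2$), and is regular away from $P_\infty$, in particular at every $Q_\ell$ where $z$ vanishes. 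Its pole order at $P_\infty$ equals $(k-2)b+(m-1)c$, which is bounded by $(m-2)c+kb$ precisely because $c\leq 2b$. These bounds place $h\in\mathcal{L}(K-A+2P_m)$, while its order $k-2$ at $P_m$ forces $h\notin\mathcal{L}(K-A+P_m)$. This gives the required strict inequality of dimensions; Noether's Lemma then yields $\mathcal{L}(A-P_m)=\mathcal{L}(A-2P_m)$, and Proposition \ref{lemma pure gaps} concludes that $\big((n^2-m)c-kb,k,\ldots,k,k-1\big)$ is a pure gap.

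I expect the main obstacle to be purely bookkeeping: confirming that $h$ vanishes to order exactly $k-2$ at $P_m$ while attaining order at least $k$ at the remaining $P_s$, and checking that the pole order at $P_\infty$ respects the bound imposed by $K-A+2P_m$. The conceptual content is already supplied by Proposition \ref{lemma pure gaps} together with the discrepancy structure of $A$; the only genuinely new ingredient is the choice of the monomial $h$ so that it separates the two Riemann–Roch spaces at $P_m$, and the verification that $c\leq 2b$ keeps $h$ inside $\mathcal{L}(K-A+2P_m)$.
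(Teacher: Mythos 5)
Your proposal follows essentially the same route as the paper's own proof: the same parent vector $\bigl((n^2-m)c-kb,k,\ldots,k\bigr)\in\Gamma(P_\infty,P_1,\ldots,P_m)$ obtained from Theorem \ref{maintheorem} with $i=0$, $j_s=0$, the same reduction via Proposition \ref{lemma pure gaps} to the single equality $\mathcal{L}(A-P_m)=\mathcal{L}(A-2P_m)$, and the same separating function $z^{k-2}(x-a_1)\cdots(x-a_{m-1})$ combined with Noether's Reduction Lemma for the canonical divisor $K=(n^2-2)cP_\infty$; your extra bookkeeping (orders at $P_s$, $P_m$, $Q_\ell$, and the pole bound at $P_\infty$ via $c\leq 2b$) is correct and merely makes explicit what the paper leaves implicit. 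One small caveat shared with the paper: the positivity $(n^2-m)c-kb>0$ does not in fact hold throughout the stated range (e.g.\ it fails for $k=a$, $m=n$), so it is really an implicit hypothesis of the corollary rather than a consequence of $2\leq k\leq a$.
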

\begin{proof}
In fact, first note that $((n^2 - m)c - kb , k, \ldots , k, k) \in \Gamma(P_{\infty}, P_1, \ldots, P_m)$ by taking $i=0$ and $j_s=0$, for all $s=1,\ldots,m$, in the Theorem \ref{maintheorem}. Let $A=((n^2-m)c - kb)P_{\infty} + \sum_{s=1}^m k P_s$. By the previous Lemma, we must prove that $\mathcal{L}(A - P_m) = \mathcal{L}(A - 2P_m)$. Let $K=(2g-2)P_{\infty}=(n^2-2)cP_{\infty}$ be a canonical divisor. Note that $z^{k-2}(x-a_1)\ldots(x-a_{m-1}) \in \mathcal{L}(K-A + 2P_m) \setminus \mathcal{L}(K-A + P_m)$ and so $\mathcal{L}(K-A + P_m) \neq \mathcal{L}(K-A + 2P_m)$. Thus, by Lemma \ref{lemma noether}, it follows that $\mathcal{L}(A - P_m) = \mathcal{L}(A - 2P_m)$.
\end{proof}

\begin{proposition}\label{lemma pure gaps 2}
Let $\alpha < 2g-1$ and  $(\alpha,1,1,\ldots,1) \in G(P_{\infty}, P_1, \ldots , P_m)$. If

i) $\exists \lambda , \beta , \gamma \in \mathbb{N}_0$, with $\lambda \geq m$, such that $\lambda c + \beta a n + \gamma b = 2g-1-\alpha$, or

ii) $2g-1-\alpha \geq (m-1)c$ and $\exists \beta, \gamma \in \mathbb{N}_0$ such that $ \beta a n + \gamma b = 2g-1-\alpha$,

then $(\alpha,1,1,\ldots,1)$ is a pure gap.
\end{proposition}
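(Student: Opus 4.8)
The plan is to verify directly the defining dimension equalities of a pure gap. Write $D=\alpha P_\infty+\sum_{s=1}^m P_s$, fix the canonical divisor $K=(2g-2)P_\infty=(n^2-2)cP_\infty$, and set $N:=2g-1-\alpha$, so the hypothesis $\alpha<2g-1$ gives $N\geq 1$. By Riemann--Roch (in the spirit of Noether's Reduction Lemma \ref{lemma noether}, as used in Corollary \ref{exemplo pure gaps}), for each point $P\in\{P_\infty,P_1,\ldots,P_m\}$ one has $\dim\mathcal{L}(D)=\dim\mathcal{L}(D-P)$ if and only if $\mathcal{L}(K-D+P)\neq\mathcal{L}(K-D)$, that is, if and only if there is a rational function in $\mathcal{L}(K-D+P)\setminus\mathcal{L}(K-D)$. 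Since $K-D=(N-1)P_\infty-\sum_{s=1}^m P_s$, the whole proof reduces to exhibiting, for each choice of $P$, one explicit function built from $x,y,z$ with controlled pole at $P_\infty$ and prescribed vanishing at $P_1,\ldots,P_m$.

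First I would treat the points $P=P_t$, $t\in\{1,\ldots,m\}$, which are all alike. Here $K-D+P_t=(N-1)P_\infty-\sum_{s\neq t}P_s$, so I need a function vanishing at every $P_s$ with $s\neq t$, nonzero at $P_t$, with pole only at $P_\infty$ of order at most $N-1$. The natural candidate is $g_t=\prod_{s\neq t}(x-a_s)$: by \eqref{divisors} its divisor is $\sum_{s\neq t}cP_s-(m-1)cP_\infty$, so it vanishes where required, takes the nonzero value $\prod_{s\neq t}(a_t-a_s)$ at $P_t$, and its only pole is of order $(m-1)c$ at $P_\infty$. Thus $g_t\in\mathcal{L}(K-D+P_t)\setminus\mathcal{L}(K-D)$ as soon as $(m-1)c\leq N-1$. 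Under hypothesis (i) this is immediate, since $N\geq\lambda c\geq mc>(m-1)c$ forces $(m-1)c\leq N-1$; under hypothesis (ii) it is essentially the standing assumption $N\geq(m-1)c$.

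It remains to handle $P=P_\infty$, where $K-D+P_\infty=NP_\infty-\sum_{s=1}^m P_s$ and I must produce a function with pole of order \emph{exactly} $N$ at $P_\infty$ that vanishes at each of $P_1,\ldots,P_m$ and is regular elsewhere. This is where the arithmetic hypotheses enter, through the pole orders $\operatorname{ord}_{P_\infty}(x-a_s)=-c$, $\operatorname{ord}_{P_\infty}(y)=-an$, $\operatorname{ord}_{P_\infty}(z)=-b$ from \eqref{divisors}, which are additive on products. Under (i), writing $N=\lambda c+\beta an+\gamma b$ with $\lambda\geq m$, I would pick exponents $e_s\geq 1$ with $\sum_{s=1}^m e_s=\lambda$ and take $f=z^{\gamma}y^{\beta}\prod_{s=1}^m(x-a_s)^{e_s}$; its pole order at $P_\infty$ is exactly $\gamma b+\beta an+\lambda c=N$, while $\operatorname{ord}_{P_s}(f)=\gamma+\beta a+e_s c\geq 1$ shows it vanishes at every $P_s$. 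Under (ii), writing $N=\beta an+\gamma b$, the function $f=y^{\beta}z^{\gamma}$ already has pole order exactly $N$ at $P_\infty$, and since $N\geq 1$ forces $(\beta,\gamma)\neq(0,0)$ it vanishes at each $P_s$ because $\operatorname{ord}_{P_s}(f)=\beta a+\gamma\geq 1$. In either case $f\in\mathcal{L}(K-D+P_\infty)\setminus\mathcal{L}(K-D)$, exhausting all points and proving the pure gap.

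The delicate part is the pole-order bookkeeping at $P_\infty$: one must ensure the chosen products acquire no hidden poles away from $P_\infty$ (which holds because $x,y,z$ have all their poles concentrated at $P_\infty$) and that the pole order is exactly, not merely at most, $N$ (this is precisely the additivity of $\operatorname{ord}_{P_\infty}$ on monomials, so no cancellation occurs). The one genuinely borderline situation is $P=P_t$ under hypothesis (ii) with $N=(m-1)c$, where $g_t$ has pole order exactly $N>N-1$ and so fails to lie in the required space; here I would either use the strict form of the inequality or replace $g_t$ by a lower-pole combination furnished by the relation $N=\beta an+\gamma b$, checking this case separately.
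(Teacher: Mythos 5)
Your proof is correct and follows essentially the same route as the paper: the same canonical divisor $K=(2g-2)P_\infty$, the same reduction via Noether's Lemma/Riemann--Roch, and the very same test functions --- $z^{\gamma}y^{\beta}\prod_{s=1}^m(x-a_s)^{e_s}$ and $y^{\beta}z^{\gamma}$ at $P_\infty$, and your $g_t=\prod_{s\neq t}(x-a_s)$, which is exactly the paper's $\frac{(x-a_1)\cdots (x-a_m)}{x-a_t}$. The borderline case you flag in (ii), namely $2g-1-\alpha=(m-1)c$, in fact never occurs: $n$ divides both $an=n(n^2-n+1)$ and $b=n^3$, so $\beta an+\gamma b\equiv 0 \pmod{n}$, whereas $(m-1)c\equiv m-1\not\equiv 0\pmod{n}$ for $2\leq m\leq n$ (and for $m=1$ one has $2g-1-\alpha\geq 1>0=(m-1)c$); hence the strict inequality $(m-1)c\leq 2g-2-\alpha$ needed for $g_t$ holds automatically, closing your one loose end. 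It is worth noting that the paper's own proof asserts the membership $\frac{(x-a_1)\cdots(x-a_m)}{x-a_i}\in\mathcal{L}(K-A+P_i)$ without checking this pole-order condition at $P_\infty$, so your bookkeeping makes explicit a step the paper silently glosses over.
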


\begin{proof}
Let $\alpha < 2g-1$ and  $(\alpha,1,1,\ldots,1) \in G(P_{\infty}, P_1, \ldots , P_m)$. Consider the divisor $A=\alpha P_{\infty} + P_1 + \cdots + P_m$ and the canonical divisor $K = (2g-2)P_{\infty}$. We will see that, if the conditions i) or ii) as above are satisfied, then $\mathcal{L}(K-A) \neq \mathcal{L}(K-A+P_{\infty})$ and $\mathcal{L}(K-A) \neq \mathcal{L}(K-A+P_{i})$, for all $i=1,\ldots,m$. Thus, Lemma \ref{lemma noether}, we conclude that $(\alpha,1,1,\ldots,1)$ is a pure gap.

First, suppose that $\exists \lambda , \beta , \gamma \in \mathbb{N}_0$, with $\lambda \geq m$, such that $\lambda c + \beta a n + \gamma b = 2g-1-\alpha$. Since $\lambda \geq m$, we can write $\lambda = \lambda_1 + \cdots + \lambda_m$, with $\lambda_i \geq 1$ for all $i=1,\ldots,m$. So, $(x-a_1)^{\lambda_1}\ldots (x-a_m)^{\lambda_m}y^{\beta} z^{\gamma} \in \mathcal{L}(K-A+P_{\infty}) \setminus \mathcal{L}(K-A)$ and $\dfrac{(x-a_1)\ldots (x-a_m)}{x-a_i} \in \mathcal{L}(K-A+P_{i}) \setminus \mathcal{L}(K-A)$, for all $i=1,\ldots,m$. Therefore, we have that $(\alpha,1,1,\ldots,1)$ is a pure gap.

Now, suppose that $2g-1-\alpha \geq (m-1)c$ and $\exists \beta, \gamma \in \mathbb{N}_0$ such that $ \beta a n + \gamma b = 2g-1-\alpha$. So, $y^{\beta} z^{\gamma} \in \mathcal{L}(K-A+P_{\infty}) \setminus \mathcal{L}(K-A)$ and $\dfrac{(x-a_1)\ldots (x-a_m)}{x-a_i} \in \mathcal{L}(K-A+P_{i}) \setminus \mathcal{L}(K-A)$, for all $i=1,\ldots,m$. Therefore, we have that $(\alpha,1,1,\ldots,1)$ is a pure gap.
\end{proof}

Let us remember that given a code $C$ with parameters $[n,k,d]$, we define its \textit{information rate} by $R=k/n$ and its \textit{relative minimum distance} by $\delta=d/n$. These parameters allows us to compare codes with different length. In the following example we get a code that have better relative parameters than the corresponding one-point code given in \cite[Table IV]{GKcodes}.

\begin{example}
Consider the $GK$ curve over $\mathbb{F}_{3^6}$ with affine equations \[Z^7=Y(2+X^2+2X^4+X^6)\;,\qquad X^3+X=Y^4\;.\] This curve has $6076$ $\mathbb{F}_{3^6}$-rational points and genus $g=99$. By Corollary \ref{exemplo pure gaps} and Proposition \ref{lemma pure gaps 2}, respectively, it follows that $(142,2,2,1)$ and $(155,1,1,1)$ are pure gaps at $P_{\infty}, P_1, P_2, P_3$. By Theorem \ref{teorema Cicero}, the $4$-point code $C_{\Omega}(D,296P_{\infty}+2P_{1}+2P_2+P_3)$ of dimension $k_{\Omega}=5869$ has minimum distance $d_{\Omega} \geq 109$.
\end{example}

\end{document}